\title[The Teichm\"uller--Randers metric]{The Teichm\"{u}ller--Randers metric}
\author[Miyachi, Ohshika and Papadopoulos]{Hideki Miyachi, Ken'ichi Ohshika, and Athanase Papadopoulos}
\address{Hideki Miyachi,
School of Mathematics and Physics,
College of Science and Engineering,
Kanazawa University,
Kakuma-machi, Kanazawa,
Ishikawa, 920-1192, Japan}
\email{miyachi@se.kanazawa-u.ac.jp}  
\address{Ken'ichi Ohshika,
Department of Mathematics,
Gakushuin University,
Mejiro, Toshima-ku, Tokyo, Japan}
  \email{ohshika@math.gakushuin.ac.jp}
\address{Athanase Papadopoulos, 
Institut de Recherche Math\'ematique Avanc\'ee (Universit\'e de Strasbourg et CNRS),l((
7 rue Ren\'e Descartes
67084 Strasbourg Cedex France}
 \email{papadop@math.unistra.fr}
\date{\today}
\thanks{This work is supported by JSPS KAKENHI Grant Numbers
16K05202, partially, 16H03933, 17H02843. The authors thank the Institut Henri Poincar\'e for its support during a stay of the three authors there, in November 2022 where this work was completed,  in the setting of the program \emph{Research in Paris}.}
\keywords{Thurston metric, Teichm\"uller space,  Teichm\"uller disc, Finsler manifold, Randers metric,  Teichm\"uller--Randers metric, extremal length}
\subjclass[2010]{53B40, 30F60,  32G15}
\newtheorem{theorem}{Theorem}[section]
\newtheorem{lemma}[theorem]{Lemma}
\newtheorem{proposition}[theorem]{Proposition}
\newtheorem{corollary}[theorem]{Corollary}
\newtheorem{claim}{Claim}
\newcommand{\teich}{\mathcal{T}}
\newcommand{\ext}{{\rm Ext}}
\newcommand{\hyperbolic}{\mathbb{H}}
\newcommand{\reals}{\mathbb{R}}
\newcommand{\teichmullernorm}[2]{\kappa(#1;#2)}
\newcommand{\randersteichmullernorm}[3]{\kappa^{#3}(#1;#2)}
\begin{document}
\maketitle

\begin{abstract}
In this paper, we introduce a new asymmetric weak metric on the Teichm\"uller space of a closed orientable surface with (possibly empty) punctures.
This new metric, which we call the Teichm\"{u}ller--Randers metric, is an asymmetric deformation of the Teichm\"uller metric, and is obtained by adding to the infinitesimal form of the Teichm\"uller metric a differential 1-form.  We study basic properties of the  Teichm\"{u}ller--Randers metric. 
In the case when the 1-form is exact, any Teichm\"uller geodesic between two points  is a unique Teichm\"{u}ller--Randers  geodesic between them. 
A particularly interesting case is when the differential 1-form is (up to a factor) the differential of the logarithm of the extremal length function associated with a measured foliation. 
We show that in this case the Teichm\"{u}ller--Randers metric is incomplete in any Teichm\"uller disc, and we give a characterisation of geodesic rays with bounded length in this disc in terms of their directing measured foliations.

\medskip

\noindent {\sc R\'esum\'e}. Dans cet article, nous introduisons une nouvelle m\'etrique asym\'etrique sur l'espace de Teichm\"uller d'une surface ferm\'ee orientable avec ou sans perforations. Cette nouvelle m\'etrique, que nous appelons m\'etrique de Teichm\"{u}ller--Randers, est une d\'eformation asym\'etrique de la m\'etrique de 
Teichm\"uller obtenue en ajoutant \`a la forme infinit\'esimale de cette derni\`ere une forme diff\'erentielle de degr\'e 1. Nous \'etudions les propri\'et\'es de base de la m\'etrique de Teichm\"{u}ller--Randers. Nous d\'emontrons que dans le cas o\`u la forme diff\'erentielle est exacte, toute g\'eod\'esique entre deux points pour la m\'etrique de Teichm\"uller est aussi une g\'eod\'esique pour la m\'etrique de Teichm\"{u}ller--Randers, et que c'est l'unique g\'eod\'esique joignant ces deux points. 
  Un cas particuli\`erement int\'eressant est celui o\`u la forme diff\'erentielle est (\`a un multiple pr\`es) la diff\'erentielle du logarithme de la longueur extr\'emale associ\'ee \`a un feuilletage mesur\'e. Nous montrons que dans ce cas la m\'etrique de  Teichm\"{u}ller--Randers restreinte \`a un disque de Teichm\"uller  quelconque n'est pas compl\`ete et nous caract\'erisons les rayons g\'eod\'esiques de longueur born\'ee dans ce disque en fonction des feuilletages mesur\'es qui les dirigent.

    \end{abstract}
\sloppy
\section{Inroduction}
A Randers metric is a deformation of a Riemannian or Finsler metric obtained by adding to its infinitesimal form a differential 1-form.
In \cite{MOP}, in the case where the surface is a torus, we exhibited a natural family of Randers metrics which connects the Teichm\"uller metric on the Teichm\"uller space of that surface to its Thurston asymmetric metric.  It is natural to study now the same kind of deformation of the Teichm\"uller metric on theTeichm\"uller space $ \teich_{g,m}$ of a general closed orientable surface $\Sigma_{g,m}$ of genus $g$ with $m$ punctures, and this is what we do in the present paper. It turns out that the metrics in this family are interesting to study in this general setting and this is what we propose to show in this paper.

In its original form given in \cite{MR3371}, a Randers metric is associated with an $n$-dimensional Riemannian manifold $(M,g)$ and a 1-form $\omega$ on $M$ satisfying $\|\omega\|_g<1$ at every point of $M$.
In this situation,  the associated \emph{Randers metric} is a Finsler asymmetric metric on $M$ defined infinitesimally by $F(v)=g(v,v)^{1/2}+\omega(v)$. Randers metrics have applications in the physical world, and they been widely studied since their appearance. 
The same construction also works when the original metric is not Riemannian, but Finsler, like in the case we study here.

In this paper we study a Randers deformation of the Teichm\"uller metric $\kappa$ on $\teich_{g,m}$, which we call the \emph{Teichm\"{u}ller--Randers metric} associated with a real 1-form $\omega$, defined using
\begin{equation}
\label{eq:Finsler-Randers}
\randersteichmullernorm{x}{v}{\omega}
=\teichmullernorm{x}{v}+\omega(v)
\end{equation}
In a natural way, the lengths of differentiable arcs on $\teich_{g,m}$ can be defined using this metric, and the distance between two points is set to be the infimum of the lengths of arcs connecting them.
The Teichm\"{u}ller--Randers distance may take negative values for a general 1-form $\omega$, but it gives a Finsler metric when the Teichm\"uller norm $\|\omega\|_T(x)$ of $\omega$ at $x$, (i.e., the supremum of the value of $\omega$ on the tangent vectors at $x$ with Teichm\"{u}ller norm $\leq 1$)  is less than $1$ at every point $x$ of $\teich_{g,m}$. 
The Teichm\"{u}ller--Randers metric becomes a weak metric when the Teichm\"uller norm of $\omega$ is $1$ (see \cref{subsec:weak_metric}).

We have already introduced and studied the Teichm\"{u}ller--Randers metric in the case of torus.
In \cite{BPT}, Belkhirat, Papadopoulos and Troyanov showed that Thurston's asymmetric metric coincides with the weak distance on the upper-half plane $\mathbb{H}$ defined by
\begin{equation}
\label{eq:BPT}
\delta(\zeta_1,\zeta_2)=\log\sup_{x\in \reals}\left|\dfrac{\zeta_2-x}{\zeta_1-x}\right|
\end{equation}
for $\zeta_1,\zeta_2\in\mathbb{H}$
if we identify the Teichm\"{u}ller space of a torus $T$ with the hyperbolic plane by choosing a generator system $a, b$ of $\pi_1(T)$, and consider normalized flat structures on $T$ such that $a$ has length $1$.
In \cite{MOP}, we showed that this weak distance is indeed a Finsler metric and that it is also given by  the formula
\begin{equation}
\label{eq:torus_case_finsler-randers}
ds_{hyp}+\dfrac{1}{2}d\log {\rm Im}(\zeta),
\end{equation}
where $ds_{hyp}$ is the hyperbolic metric on $\mathbb{H}$ of constant curvature $-4$. 
Since the Teichm\"{u}ller metric coincides with the hyperbolic metric in this setting, the weak distance in \cref{eq:BPT} is nothing but the Teichm\"{u}ller--Randers metric given by \cref{eq:torus_case_finsler-randers}, that is, associated with the 1-form $\omega=-(1/2)d\log {\rm Im}(\zeta)^{-1}$. For $0\le t\le 1$, we define $\delta_t$ be the weak metric defined by the Finsler norm $ds_{hyp}+\dfrac{t}{2}d\log {\rm Im}(\zeta)$.
We also note that the 1-form $\omega=-(1/2)d\log {\rm Im}(\zeta)^{-1}$ is exact and $\rm Im (\zeta)^{-1}$ coincides with the extremal length of the isotopy class of simple closed curves corresponding to $a$.


We now turn to stating our main theorems.
Before that, we recall that the Teichm\"uller distance is a uniquely geodesic metric, and that any geodesic extends to a holomorphic disc called a Teichm\"{u}ller disc.
Namely, for any two points in $\teich_{g,m}$, there is a holomorphic (or anti-holomorphic) isometry $(\mathbb{H},d_{hyp}) \to (\teich_{g,m},d_T)$ whose image contains the two points, and this image is called a Teichm\"{u}ller disc.
A Teichm\"uller disc is determined by a holomorphic quadratic differential $q$, hence we denote it by $\mathbb{D}_q$ (see \cref{subsec:Proof_thm:extremal_length}). 

For a measured foliation $F$ on $\Sigma_{g,m}$, we  denote by $\ext_x(F)$ the function on  $\teich_{g,m}$ taking a point $x$ to the extremal length of a measured foliation $F$ at $x$, and by $q_{F,x}$ the Hubbard--Masur differential on $x$ for $F$ (see  \cref{subsec:Teichmuller-theory}). We shall show the following three main theorems.

\begin{theorem}[Geodesics of the Teichm\"{u}ller--Randers metric]
\label{thm:extremal_length}
Let $F$ be a measured foliation on $\Sigma_{g,m}$, and set $\displaystyle \omega=-\frac{1}{2}d\,\log \ext_{(\cdot)}(F)$.   
Then the following hold.
\begin{enumerate}[(i)]
\item
The (asymmetric) metric space $(\teich_{g,m},\delta^{t\omega}_T)$ is a uniquely geodesic space such that the Teichm\"uller geodesics are the geodesics. 
\item
For any $x\in \teich_{g,m}$ and for any $0\leq t\leq 1$, the Teichm\"uller disc defined by $q_{F,x}$ coincides with the image of an isometric embedding of $(\mathbb{H},\delta_t)$ into  $(\teich_{g,m},\delta^{t\omega}_T)$.  
\end{enumerate}
\end{theorem}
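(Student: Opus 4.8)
\emph{Plan.} I would first isolate the two structural inputs that make the theorem almost formal, then read off (i) and (ii) from them.

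\textbf{Step 1: $\omega$ is a well-defined $1$-form of Teichm\"uller norm $\le 1$.} Since $F$ is fixed, $x\mapsto \log\ext_x(F)$ is a smooth function on $\teich_{g,m}$ (real-analyticity of extremal length; see \cref{subsec:Teichmuller-theory}), so $\omega=-\frac12 d\log\ext_{(\cdot)}(F)$ is a genuine smooth $1$-form. Kerckhoff's formula $d_T(x,y)=\frac12\log\sup_{G}\ext_y(G)/\ext_x(G)$, applied with $G=F$, shows that $\log\ext_{(\cdot)}(F)$ is $2$-Lipschitz for $d_T$; equivalently its differential has dual Teichm\"uller norm $\le 2$, so $\|\omega\|_T\le 1$ at every point (in fact $=1$, the supremum being approached along the Teichm\"uller ray directed by $q_{F,x}$). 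Hence for $0\le t\le 1$ we have $\|t\omega\|_T\le 1$, so $\teichmullernorm{x}{v}+t\omega(v)\ge 0$ for every $v$ and $\delta^{t\omega}_T$ is a bona fide (asymmetric, and for $t=1$ merely weak) metric; this also places us inside the setting of \cref{subsec:weak_metric}.

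\textbf{Step 2: Part (i).} Because $\omega=df$ with $f=-\frac12\log\ext_{(\cdot)}(F)$ is exact, for any piecewise-$C^1$ path $\gamma$ from $x$ to $y$ one has $\mathrm{Length}_{\delta^{t\omega}_T}(\gamma)=\mathrm{Length}_{d_T}(\gamma)+t\int_\gamma\omega=\mathrm{Length}_{d_T}(\gamma)+t\bigl(f(y)-f(x)\bigr)$, where the last term is the same for every such $\gamma$. Taking infima gives the global formula $\delta^{t\omega}_T(x,y)=d_T(x,y)+t\bigl(f(y)-f(x)\bigr)$, and a path realizes $\delta^{t\omega}_T(x,y)$ if and only if it realizes $d_T(x,y)$. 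Since the Teichm\"uller metric is uniquely geodesic (Teichm\"uller's theorem), $(\teich_{g,m},\delta^{t\omega}_T)$ is uniquely geodesic with exactly the Teichm\"uller geodesics as geodesics. (At $t=1$ the metric is only weak and vanishes in the forward direction along suitable Teichm\"uller rays — the phenomenon behind the incompleteness statement — but the length computation above still yields uniqueness of the distance-realizing path.) This is the exact-$1$-form principle announced in the introduction.

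\textbf{Step 3: Part (ii).} Fix $x$ and let $\Phi\colon(\mathbb{H},d_{hyp})\to(\teich_{g,m},d_T)$ be the holomorphic isometric embedding whose image is the Teichm\"uller disc $\mathbb{D}_{q_{F,x}}$, normalised so that $\Phi(\sqrt{-1})=x$. I use two facts about this disc: first, $\Phi$ is a $d_T$-isometry onto a convex (totally geodesic) subset, so $\Phi^*\kappa=ds_{hyp}$ and $d_T(\Phi\zeta_1,\Phi\zeta_2)=d_{hyp}(\zeta_1,\zeta_2)$; second, the standard computation of extremal length along a Teichm\"uller disc gives $\ext_{\Phi(\zeta)}(F)=\ext_x(F)/\Im(\zeta)$ (the generalisation of $\ext(a)=\Im(\zeta)^{-1}$ in the torus case; see \cref{subsec:Teichmuller-theory}). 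Consequently $\Phi^*(t\omega)=-\tfrac t2\,d\log\bigl(\ext_x(F)/\Im\zeta\bigr)=\tfrac t2\,d\log\Im\zeta$, so the pulled-back Finsler norm is $ds_{hyp}+\tfrac t2\,d\log\Im\zeta$, i.e. $\Phi$ is a Finsler isometry onto $\mathbb{D}_{q_{F,x}}$. To upgrade to an isometry of metric spaces, combine the two facts with the global formula of Step 2: for $\zeta_1,\zeta_2\in\mathbb{H}$, $\delta^{t\omega}_T(\Phi\zeta_1,\Phi\zeta_2)=d_T(\Phi\zeta_1,\Phi\zeta_2)+t\bigl(f(\Phi\zeta_2)-f(\Phi\zeta_1)\bigr)=d_{hyp}(\zeta_1,\zeta_2)+\tfrac t2\bigl(\log\Im\zeta_2-\log\Im\zeta_1\bigr)=\delta_t(\zeta_1,\zeta_2)$, the last equality because $\tfrac t2\,d\log\Im\zeta$ is exact on $\mathbb{H}$. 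Thus $\Phi$ is an isometric embedding of $(\mathbb{H},\delta_t)$ into $(\teich_{g,m},\delta^{t\omega}_T)$ with image $\mathbb{D}_{q_{F,x}}$.

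\textbf{Main obstacle.} The exact-form bookkeeping in Steps 2--3 is routine; the real content is the second fact in Step 3. The point is to pin down, with the correct normalisation of the identification $\mathbb{H}\leftrightarrow\mathbb{D}_{q_{F,x}}$ (which of the horizontal/vertical foliations of $q_{F,x}$ is $F$, and the direction of the associated Teichm\"uller ray), that $\ext_{\Phi(\zeta)}(F)$ is exactly $\ext_x(F)/\Im\zeta$ and not, say, $\ext_x(F)\,|\zeta|^2/\Im\zeta$; getting the sign of $\Phi^*\omega$ right is precisely what makes the restricted metric agree with $\delta_t$ rather than with a metric oriented the other way.
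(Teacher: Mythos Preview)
Your proposal is correct and follows essentially the same route as the paper: part (i) is the exact-form principle (the paper's \cref{prop:geodesic}), and part (ii) is the combination of that principle with the formula $\ext_{\Phi(\zeta)}(F)=\ext_x(F)/\Im(\zeta)$ on the Teichm\"uller disc, which is exactly the content of the paper's \cref{lem:extremal_length_Teich}(a). Your identification of this formula as the ``main obstacle'' matches the paper's emphasis; the only minor addition in your write-up is the explicit verification in Step~1 that $\|\omega\|_T\le 1$ via Kerckhoff's formula, which the paper leaves implicit.
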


\begin{theorem}[Isometric discs]
  \label{thm:isometry}
Suppose that there is an isometry $\phi\colon (\mathbb{H},\delta)\to (\teich_{g,m}, \delta_T^{\omega})$ where $\omega$ is exact and satisfies $\|\omega\|_T\le 1$ in a neighbourhood of the image of $\phi$. Then, there is a measured foliation $F$ on $\Sigma_{g,m}$ such that $\phi$ is a holomorphic or anti-holomorphic isometry onto the Teichm\"uller disc associated with $q_{F,x}$  with $x=\phi(i)$, and
such that $
\omega=-(1/2)\,d\log \ext_{(\cdot)}(F)
$
holds on the image of that isometry.
\end{theorem}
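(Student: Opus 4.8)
\emph{Plan of proof.} The idea is to first remove the $1$-form from both sides of the alleged isometry, reducing the statement to a rigidity property of the Teichm\"uller metric, and then to recover $\omega$ from the infinitesimal data on the resulting Teichm\"uller disc.

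\emph{Step 1 (symmetrisation).} Since $\omega$ is exact, write $\omega=dh$ with $h$ a function on $\teich_{g,m}$. By \cref{eq:Finsler-Randers}, the $\delta^{\omega}_T$-length of a differentiable arc from $x$ to $y$ equals its Teichm\"uller length plus $h(y)-h(x)$, and the second term does not depend on the arc; taking the infimum over arcs gives $\delta^{\omega}_T(x,y)=d_T(x,y)+h(y)-h(x)$, hence $\delta^{\omega}_T(x,y)+\delta^{\omega}_T(y,x)=2\,d_T(x,y)$. In the same way, by \cref{eq:torus_case_finsler-randers} (see also \cite{MOP}) one has $\delta(\zeta_1,\zeta_2)=d_{hyp}(\zeta_1,\zeta_2)+\tfrac12\log(\Im\zeta_2/\Im\zeta_1)$, whose symmetrisation is $2\,d_{hyp}$. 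Therefore $\phi$, being an isometry for the asymmetric metrics, is also an isometry for their symmetrisations, so that $\phi\colon(\mathbb{H},d_{hyp})\to(\teich_{g,m},d_T)$ is an isometric embedding.

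\emph{Step 2 (the image is a Teichm\"uller disc).} I would now invoke the rigidity of isometric embeddings of the hyperbolic plane into $(\teich_{g,m},d_T)$: any such embedding is a holomorphic or anti-holomorphic isometry onto a Teichm\"uller disc. This rests on the facts that $d_T$ is uniquely geodesic, that each of its geodesics extends to a Teichm\"uller disc, and on the classification of totally geodesic planes in $\teich_{g,m}$ (compare \cref{subsec:Proof_thm:extremal_length}). It follows that $\phi(\mathbb{H})=\mathbb{D}_q$ for some holomorphic quadratic differential $q$ at $x=\phi(i)$, that $\phi$ is holomorphic or anti-holomorphic, and in particular that $\phi$ is smooth. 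Let $F$ be the horizontal foliation of $q$; by the Hubbard--Masur theorem $q=q_{F,x}$, so $\phi(\mathbb{H})$ is precisely the Teichm\"uller disc associated with $q_{F,x}$.

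\emph{Step 3 (recovering $\omega$).} A smooth distance-preserving map between length spaces preserves the infinitesimal weak-Finsler norms; evaluating on a tangent vector $w$ and on $-w$ and subtracting, the symmetric parts cancel and, comparing \cref{eq:Finsler-Randers} with \cref{eq:torus_case_finsler-randers}, we obtain $\phi^{*}\omega=\tfrac12\,d\log\Im\zeta$ on $\mathbb{H}$. On the other hand, along $\mathbb{D}_q=\mathbb{D}_{q_{F,x}}$ Kerckhoff's formula (equivalently, the behaviour of the Hubbard--Masur differential along a Teichm\"uller disc) gives $\ext_{\phi(\zeta)}(F)=\ext_x(F)/\Im\zeta$ in both the holomorphic and the anti-holomorphic case, so that $\phi^{*}\bigl(-\tfrac12\,d\log\ext_{(\cdot)}(F)\bigr)=\tfrac12\,d\log\Im\zeta=\phi^{*}\omega$. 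Since $\phi$ is a diffeomorphism onto $\mathbb{D}_q$, this yields $\omega=-(1/2)\,d\log\ext_{(\cdot)}(F)$ on the image of $\phi$, as claimed.

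\emph{Main obstacle.} The substantive point is Step 2: one must rule out that an isometrically embedded hyperbolic plane be ``bent'', i.e.\ assembled out of pieces of distinct Teichm\"uller discs --- this is the rigidity of totally geodesic hyperbolic planes in Teichm\"uller space. Granting it, Steps 1 and 3 are essentially formal, the only mild care being the uniform treatment of the holomorphic/anti-holomorphic dichotomy and, in Step 3, the smoothness of $\phi$, which Step 2 already provides.
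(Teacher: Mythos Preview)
Your approach is correct and in fact streamlines the paper's proof. The paper reaches the same conclusion by a three-case analysis (horizontal lines, vertical lines, general pairs) to deduce that $\phi$ is a $(d_{hyp},d_T)$-isometry, and in Case~2 it invokes the hypothesis $\|\omega\|_T\le 1$ to pass from an integral identity along vertical segments to a pointwise one. Your symmetrisation in Step~1 accomplishes all of this in one stroke: since both $\delta$ and $\delta_T^\omega$ differ from symmetric metrics by exact forms, adding $\delta(\zeta_1,\zeta_2)+\delta(\zeta_2,\zeta_1)$ kills the asymmetric terms and yields $d_T\circ(\phi\times\phi)=d_{hyp}$ immediately. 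Your argument never uses $\|\omega\|_T\le 1$, so you actually prove the theorem without that hypothesis. Step~2 is the same as the paper's, which also just cites the rigidity theorem \cite{MR3608291}.

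Two small points should be tightened. First, in the paper's conventions the Hubbard--Masur differential $q_{F,x}$ has $F$ as its \emph{vertical} foliation, and it is for this $F$ that \cref{lem:extremal_length_Teich}(a) gives $\ext_{\phi(\zeta)}(F)=\ext_x(F)/\Im\zeta$; so in Step~2 take $F$ to be the vertical foliation, not the horizontal one. Second, the Teichm\"uller disc determines $q$ only up to a nonzero complex scalar, so writing ``let $F$ be the (vertical) foliation of $q$'' is ambiguous. What the rigidity theorem gives is $\phi=\phi_{q'}$ (or $\phi=\phi_{q'}\circ(\zeta\mapsto -\bar\zeta)$) for a \emph{specific} $q'$, unique up to positive scalar; you must take $F$ to be the vertical foliation of this $q'$ for the extremal-length formula to match your parametrisation. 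Finally, note that the conclusion $\phi^*\omega=\tfrac12\,d\log\Im\zeta$ in Step~3 follows already at the metric level, without the infinitesimal argument: from $\delta_T^\omega=d_T+h(\cdot)-h(\cdot)$ and $\delta=d_{hyp}+\tfrac12\log(\Im\cdot/\Im\cdot)$ together with $d_T\circ(\phi\times\phi)=d_{hyp}$, one gets $h\circ\phi-\tfrac12\log\Im\zeta=\mathrm{const}$, so smoothness of $\phi$ is not even needed at this step.
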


%
%

From \cref{thm:extremal_length} and \cref{thm:isometry}, for a fixed measured foliation $F$, we have a characteristic property of the geometry of the weak distance $\delta^\omega_T$ with $\omega=-(1/2)\,d\log \ext_{(\cdot)}(F)$ on the Teichm\"uller disc defined by the Hubbard--Masur differential for $F$. Since, by \cref{thm:extremal_length}, any Teichm\"uller disc is totally geodesic with respect to $\delta^\omega_T$, it is natural to ask how the weak distance $\delta^\omega_T$ behaves on Teichm\"uller discs other than the one associated with $q_{F,x}$. 

\begin{theorem}
\label{thm:other_disc}
Let $F$ be a measured foliation on $\Sigma_{g,m}$, and set $\omega=-(1/2)\,d\log \ext_{(\cdot)}(F)$.For any $x \in \teich_{g,m}$ and for any measured foliation $G$ on $\Sigma_{g,m}$, we have the following.
\begin{itemize}
\item[(1)]
If $q_{G,x}$ is not a complex constant multiple of $q_{F,x}$, then  the restriction of $\delta^\omega_T$ to the Teichm\"uller disc $\mathbb{D}_{q_{G,x}}$ is a weak non-negative distance function which separates any two points.
\item[(2)]
The following two conditions are equivalent:
\begin{itemize}
\item
$i(F,G)\ne 0$.
\item
The Teichm\"uller geodesic ray directed by $q_{G,x}$ has bounded length with respect to $\delta^\omega_T$.
\end{itemize}

\end{itemize}
In particular,  the restriction of $\delta^\omega_T$ to every Teichm\"{u}ller disc is incomplete.
\end{theorem}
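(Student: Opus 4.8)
The plan is to reduce all three assertions to a single closed formula for $\delta^\omega_T$ coming from the exactness of $\omega$, and then feed in Kerckhoff's formula for $d_T$ and the asymptotics of extremal length along a Teichm\"uller ray. First, $\omega$ is exact with potential $-\tfrac12\log\ext_{(\cdot)}(F)$, so the $\omega$-integral of any arc depends only on its endpoints. By \cref{thm:extremal_length}(i) the Teichm\"uller geodesic $[a,b]$ realises $\delta^\omega_T(a,b)$, so computing its $\delta^\omega_T$-length gives, for all $a,b\in\teich_{g,m}$,
\begin{equation*}
\delta^\omega_T(a,b)=d_T(a,b)-\tfrac12\log\frac{\ext_b(F)}{\ext_a(F)}.
\end{equation*}
By Kerckhoff's formula $d_T(a,b)=\tfrac12\log\sup_{[\mathcal G]}\ext_b(\mathcal G)/\ext_a(\mathcal G)$ the right-hand side is $\ge0$, and it vanishes exactly when $[F]$ attains the supremum, i.e.\ when $[F]$ is the projective class of the foliation maximally stretched along $[a,b]$. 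Since a Teichm\"uller disc is isometrically embedded, the geodesic between two of its points stays inside it, so this same formula computes the restriction of $\delta^\omega_T$ to any Teichm\"uller disc.

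For part (1): the displayed formula immediately shows that $\delta^\omega_T$ restricted to $\mathbb{D}_{q_{G,x}}$ is non-negative, vanishes on the diagonal and satisfies the triangle inequality (which comes from that of $d_T$ together with the telescoping of the $\log\ext$ term), hence is a weak distance; and it separates two of its points unless $[F]$ is the foliation maximally stretched along the Teichm\"uller segment between them. Now the projective classes of foliations that can so arise inside $\mathbb{D}_{q_{G,x}}$ are exactly the straight-line foliations of $q_{G,x}$, and by the standard theory $[F]$ is one of these precisely when $q_{F,x}\in\complexes^{*}q_{G,x}$; by symmetry of this relation, that is the negation of the hypothesis, so the hypothesis forces $\delta^\omega_T$ to be a separating weak distance on $\mathbb{D}_{q_{G,x}}$. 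For part (2): parametrise the ray $R=R_{q_{G,x}}$ issuing from $x$ by $d_T$-arclength; then (using $\|\omega\|_T\le1$, so the integrand is $\ge0$) its $\delta^\omega_T$-length over $[0,T]$ equals $\delta^\omega_T(x,R(T))=T-\tfrac12\log\bigl(\ext_{R(T)}(F)/\ext_x(F)\bigr)$, which is non-decreasing in $T$. By Gardiner's asymptotic formula for extremal length along a Teichm\"uller ray, $e^{-2t}\ext_{R(t)}(F)\to i(F,G)^{2}/\ext_x(G)$ as $t\to\infty$. If $i(F,G)\ne0$ this limit is positive, so the length converges to $\tfrac12\log\bigl(\ext_x(F)\ext_x(G)/i(F,G)^{2}\bigr)$ and the ray is bounded; if $i(F,G)=0$ the limit is $0$, so $\log\ext_{R(T)}(F)-2T\to-\infty$ and the length tends to $+\infty$. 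This gives the equivalence in (2).

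For the last assertion, let $\mathbb{D}$ be any Teichm\"uller disc; pick $x\in\mathbb{D}$ and let $q=q_{G,x}$ be the Hubbard--Masur differential at $x$ of a measured foliation $G$ with $\mathbb{D}_{q}=\mathbb{D}$ (the vertical foliation of the differential defining $\mathbb{D}$ at $x$ will do), and let $G'$ be the horizontal foliation of $q$, so that $q_{G',x}=-q$ and $\mathbb{D}_{q_{G',x}}=\mathbb{D}$. Because $G$ and $G'$ fill $\Sigma_{g,m}$ (equivalently $\ext_x(F)\ge(2\|q\|)^{-1}(i(F,G)^{2}+i(F,G')^{2})>0$), at least one of $i(F,G)$, $i(F,G')$ is non-zero, so by part (2) one of the rays $R_{q_{G,x}}$, $R_{q_{G',x}}$ is a $\delta^\omega_T$-bounded Teichm\"uller ray $R$ lying in $\mathbb{D}$. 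From the closed formula and Gardiner's asymptotic, $\delta^\omega_T(R(s),R(t))=\tfrac12(\eta(s)-\eta(t))$ for $s\le t$, where $\eta(u)=\log\ext_{R(u)}(F)-2u-c\to0$; hence $(R(t))_t$ is Cauchy for $\delta^\omega_T$. But $\ext_{R(t)}(F)\to\infty$ forces $\delta^\omega_T(R(t),y)=d_T(R(t),y)-\tfrac12\log\bigl(\ext_y(F)/\ext_{R(t)}(F)\bigr)\to\infty$ for every $y\in\mathbb{D}$, so $(R(t))_t$ has no limit in $\mathbb{D}$; therefore $\delta^\omega_T$ restricted to $\mathbb{D}$ is incomplete.

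The main obstacle is the input to part (2): one needs Gardiner's asymptotic in exactly the right normalisation and, crucially, with the correct identification of the foliation governing the leading term (so that the answer reads $i(F,G)$ rather than $i(F,F_h(q_{G,x}))$), which forces one to fix compatibly the conventions for the Hubbard--Masur differential and for the direction of a Teichm\"uller ray. For part (1) one similarly needs the uniqueness of the maximiser in Kerckhoff's inequality, namely that it is the maximally stretched foliation of the relevant Teichm\"uller segment. Both are classical, so the genuinely new ingredient is the exactness-driven closed formula of the first paragraph, after which all three statements become essentially bookkeeping.
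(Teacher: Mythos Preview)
Your overall strategy is sound and largely parallels the paper's: both start from the closed formula $\delta^\omega_T(a,b)=d_T(a,b)+\tfrac12\log\ext_a(F)-\tfrac12\log\ext_b(F)$ coming from exactness, and then feed in known extremal-length estimates. However, the execution diverges in two places, and there is one genuine error.

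For part~(1), the paper argues pointwise via the Gardiner formula: it first proves (as a separate claim) that the non-proportionality of $q_{F,y}$ and $q_{G,y}$ persists at every $y\in\mathbb{D}_{q_{G,x}}$, and then deduces $|\omega(v_y)|<1$ strictly for every unit tangent $v_y$ to the disc, whence $\delta^\omega_T(x_1,x_2)>0$. Your route through Kerckhoff's formula and the uniqueness of its maximiser is equally valid and arguably cleaner, since the ``straight-line foliations'' of a Teichm\"uller disc form a fixed circle of projective classes independent of base point, so checking the condition at $x$ suffices. For the incompleteness, the paper invokes Masur's theorem that $e^{i\theta}q_{G,x}$ has uniquely ergodic vertical foliation for almost every $\theta$, whereas your filling argument (one of $i(F,G)$, $i(F,G')$ is nonzero since the vertical and horizontal foliations of $q_{G,x}$ jointly fill) is more elementary and avoids that deep input entirely.

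The error is in part~(2): the asymptotic you call ``Gardiner's'' is not correct as stated. In general $e^{-2t}\ext_{R(t)}(F)$ does converge, but the limit is Walsh's expression $\sum_i i(G_i,F)^2/i(G_i,\mathcal{H}(q_{G,x}))$ over the indecomposable components $G_i$ of $G$, not $i(F,G)^2/\ext_x(G)$; your formula holds only when $G$ is indecomposable. This does not break your proof, since all you actually use is that the limit exists and vanishes precisely when $i(F,G)=0$, which remains true for Walsh's formula (because $i(F,G)=\sum_i i(F,G_i)$ with all summands nonnegative). You should replace the explicit limit by the correct one, or simply cite the existence of the limit together with the characterisation of its zero set; the paper does the latter, citing results of Miyachi and Walsh.
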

We note that the weak distance $\delta$ on $\mathbb{H}$, which corresponds to the Teichm\"{u}ller space of a torus, does not separate points. \Cref{thm:other_disc} gives a generalisation of this torus case. See \cref{subsec:weak_metric} for more details.

As can been seen in the definition, our metric depends on the choice of the (projective class) of a measured foliation $F$.
Because of this,  our metric is not invariant under the entire mapping class group.
Still if we consider the family of metrics making $x$ and $F$ vary, then the family is invariant under the action of the mapping class group.

%
%

Besides the theorems stated above, we shall also discuss the extension of the Hamilton-Krushkal condition (see \cref{thm:extension_Hamilton_Krushkal_condition}), and the Teichm\"{u}ller--Randers cometric on the cotangent space (see \cref{thm:Randers--Teichmuller-cometric}).

\section{Preliminaries}
\label{Preliminaries}

\subsection{Weak metric}
\label{subsec:weak_metric}
A \emph{weak metric} $\delta$ on a set $X$ is a map $\delta: X\times X\to \reals$ satisfying the following.
\begin{enumerate}
\item  $\delta(x,x)= 0$ for every $x$ in $X$;
\item  $\delta(x,y)\geq 0$ for every $x$ and $y$  in $X$;
\item $\delta(x,y)+\delta(y,z)\geq \delta(x,z)$ for every $x$, $y$ and $z$ in $X$.
\end{enumerate}
A weak metric $\delta$ is said to \emph{separate points} if $\delta(x_1,x_2)=0$ for $x_1,x_2\in X$ implies $x_1=x_2$, and
to be \emph{complete} if for any sequence $(x_n)$ in $X$ satisfying $\delta(x_n,x_{n+m})\to 0$ as $n,m\to \infty$, the sequence $(x_n)$  converges in $X$ (see \cite[I.1]{MR0296877}). (Notice that since the metric is not symmetric, the order of the arguments in $\delta(x_1,x_2)$ is important.)

In  \cite{BPT}, the following weak metric was introduced on $\hyperbolic$.
First, for $\zeta_1\not= \zeta_2 \in \hyperbolic$, we set 
\begin{eqnarray}
\label{M}
M(\zeta_1, \zeta_2)=\sup_{x\in \reals}\left|\dfrac{\zeta_2-x}{\zeta_1-x}\right|.
\end{eqnarray}
For $\zeta_1= \zeta_2$, we set $M(\zeta_1, \zeta_2)=1$. We set
$$
\delta(\zeta_1,\zeta_2)=\log M(\zeta_1,\zeta_2)\quad (\zeta_1,\zeta_2\in \mathbb{H}).
$$
Then, $\delta$ is an asymmetric weak metric on $\mathbb{H}$. Furthermore, $\delta$ does not separate points of $\mathbb{H}$. Indeed, when $\zeta_1=y_1i$, $\zeta_2=y_2i\in \mathbb{H}$ with $y_1<y_2$, 
$\delta(\zeta_1,\zeta_2)=0$. In particular, $\delta$ is not complete (see \cite[Proposition 1]{BPT}). The distance between $\zeta_1$ and $\zeta_2\in \mathbb{H}$ is explicitly given by
$$
\delta(\zeta_1,\zeta_2)=\log \left(\dfrac{|\zeta_2-\overline{\zeta_1}|+|\zeta_2-\zeta_1|}{|\zeta_1-\overline{\zeta_1}|}
\right)
$$
(see \cite{BPT}).
Hence, any geodesic ray tending to $\mathbb{R}\subset \partial \mathbb{H}$ has bounded length, and the length of a geodesic ray is infinite only if it goes upward in the vertical direction. 

We note that when we identify $\mathbb{H}$ with the Teichm\"uller space of a torus, the ideal boundary $\partial \mathbb{H}$ is naturally thought of as the Thurston boundary, which is the space of projective measured foliations on the torus (see \cite{MR568308}). Using this identification, we see that the intersection number $i(F_{x},F_{\infty})$ is zero if and only if $x=\infty$, where $[F_x]$ is the projective measured foliation corresponding to an arbitrary $x\in \partial \mathbb{H}$.
This corresponds to the condition of \cref{thm:other_disc} in the case of torus.

\subsection{Teichm\"uller theory}
\label{subsec:Teichmuller-theory}
We review some of Teichm\"uller theory. We refer the reader to \cite{MR1215481} for more details.
\subsubsection{Teichm\"uller space}
Let $\Sigma_{g,m}$ be a closed orientable surface of  type $(g,m)$, that is, of genus $g$ with $m$ points deleted The integers $g$ and $m$ may take all nonnegative values except that if $g=0$ we assume $m\geq 4$ and if $g=1$ we assume that $m\geq 1$. A \emph{marked Riemann surface} $(X,f)$ of type $(g,m)$ is a pair of an analytically finite Riemann surface $X$ of type $(g,m)$ and an orientation-preserving homeomorphism $f\colon \Sigma_{g,m}\to X$. Two marked Riemann surfaces $(X_1,f_1)$ and $(X_2,f_2)$ are said to be \emph{Teichm\"uller equivalent} if there is a conformal mapping $h\colon X_1\to X_2$ such that $h\circ f_1$ is homotopic to $f_2$. The set $\teich_{g,m}$ of Teichm\"uller equivalence classes of marked Riemann surfaces of type $(g,m)$ is called the \emph{Teichm\"uller space} of analytically finite Riemann surfaces of type $(g,m)$. 
The \emph{Teichm\"uller distance} $d_T$ on $\teich_{g,m}$ is defined by
$$
d_T(x,y)=\dfrac{1}{2}\log \inf_h K(h)
$$
where $h$ ranges over all quasi-conformal maps $h\colon X_1\to X_2$ homotopic to $f_2\circ f_1^{-1}$ and where $K(h)$ denotes the maximal quasiconformal dilatation of $h$. 
The Teichm\"uller space is known to be a complex manifold which is biholomorphically equivalent to a bounded domain in $\mathbb{C}^{3g-3+m}$. Furthermore, the Teichm\"uller distance is complete, uniquely geodesic, and coincides with the Kobayashi distance.

\subsubsection{Infinitesimal theory}
For a Riemann surface $X$, let $L^\infty(X)$ be the complex Banach space of bounded measurable $(-1,1)$-forms $\mu=\mu(z)(d\overline{z}/dz)$ on $X$ with the norm
$$
\|\mu\|_\infty={\rm ess.sup}\{|\mu(z)|\mid z\in X\}.
$$
A form  in $L^\infty(X)$ is called a \emph{Beltrami differential}.

Let $A^2(X)$ be the Banach space of holomorphic quadratic differentials $\varphi=\varphi(z)dz^2$ with the norm
$$
\|\varphi\|_1=\int_X|\varphi(z)|dxdy.
$$
There is a natural pairing between Beltrami differentials and holomorphic quadratic differentials as follows.
\begin{equation}
\label{eq:pairing_1}
L^\infty(X)\times A^2(X)\ni (\mu,\varphi)\mapsto 
\int_X\mu \varphi.
\end{equation}
Let $N^\infty(X)\subset L^\infty(X)$ be the subspace orthogonal to $A^2(X)$ with respect to the pairing. Namely,
$$
N^\infty(X):=\left\{\mu\in L^\infty(X)\mid \int_X\mu \varphi=0, \ \forall \varphi\in A^2(X)\right\}.
$$
Two Beltrami differentials $\mu$ and $\nu$ are \emph{infinitesimally Teichm\"uller equivalent} if $\mu-\nu\in N^\infty(X)$.
The (holomorphic) tangent space $T_x\teich_{g,m}$ at $x=(X,f)\in \teich_{g,m}$ is canonically identified with the quotient space $L^\infty(X)/N^\infty(X)$. 
Hence, for $x=(X,f)\in \teich_{g,m}$, the non-degenerate pairing \cref{eq:pairing_1} descends to a pairing
$$
T_x\teich_{g,m}\times A^2(X)\ni (v,\varphi)\mapsto \langle v,\varphi\rangle=\int_X\mu \varphi
$$
where $v=[\mu]$ with $\mu\in L^\infty(X)$. From this observation, the space $A^2(X)$ is canonically identified with the (holomorphic) cotangent space $T^*_x\teich_{g,m}$ at $x\in \teich_{g,m}$. A real $1$-form $\omega$ is \emph{presented by} $\psi\in A^2(X)$ at $x=(X,f)\in \teich_{g,m}$ if
$$
\omega(v)={\rm Re}\langle v,\psi\rangle
$$
for all $v\in T_x\teich_{g,m}$. Notice that in general any real $1$-form on a complex manifold is the real part of a $(1,0)$-form.

Teichm\"{u}ller proved in \cite[Sec 25]{Teich, TeichTr} that  a Finsler distance is defined by a metric, called the \emph{Teichm\"uller metric}, defined by
$$
\teichmullernorm{x}{v}=\sup\left\{
{\rm Re}
\langle v, \varphi\rangle\mid \varphi\in A^2(X), \|\varphi\|_1=1
\right\}
$$
for $v\in T_x\teich_{g,m}$. 
This distance coincides with the Teichm\"{u}ller distance defined above.

We call a Beltrami differential $\mu$ on $X$ a \emph{Teichm\"uller differential} when it has a form $\mu=c\overline{\varphi}/|\varphi$ for some $\varphi\in A^2(X)-\{0\}$.
It is known that when a Beltrami differential $\mu$ is \emph{infinitesimally extremal} in the sense that $\|\mu\|_\infty={\rm Re}
\langle v, \varphi\rangle$, then $\mu$ must be a Teichm\"uller differential.
Fix a basepoint $x_0=(X_0,f_0)$ in $\teich_{g,m}$. For $t\ge 0$ and $\varphi\in A^2(X_0)$, we denote by $F_t\colon X_0\to X_t=F_t(X_0)$ a quasi-conformal map with the property that $\overline{\partial}F_t=\tanh(t)(\overline{\varphi}/|\varphi|)\partial F_t$. Then, a path
$$
r_\varphi\colon [0,\infty)\ni t\mapsto (X_t,F_t\circ f_0)\in \teich_{g,m}
$$
contitutes a geodesic ray with respect to $d_T$. We call such a geodesic the \emph{Teichm\"uller geodesic ray} emanating from $x_0$. It is known that for any $x\in \teich_{g,m}-\{x_0\}$, there is a unique Teichm\"uller geodesic ray passing $x$ and emanating from $x_0$. Furthermore, 
$$
(0,\infty)\times \{\varphi\in A^2(X_0)\mid \|\varphi\|=1\}
\ni (t,\varphi)\mapsto r_\varphi(t)\in \teich_{g,m}\setminus \{x_0\}
$$
is a homeomorphism.

Unless $(g,m)$ is either $(1,1)$ or $(0,4)$, the Teichm\"uller metric is not Riemannian. 
This was known to Teichm\"{u}ller, but we can prove it just by using the fact that the group of linear isometries of a tangent (or cotangent) space of any Riemannan metric is a orthogonal group, whereas by \cite{\cite{MR1322950}, \cite{MR348098},
MR0288254}}, the linear isometry group of a tangent/cotangent space with respect to the Teichm\"{u}ller metric is a finite union of $S^1$.

The following might be well known and follows from the discussion in the proof of Lemma 3 in \cite[p. 173]{MR903027}. For completeness, we give a brief proof.

\begin{lemma}[Derivative of the Teichm\"uller norm]
\label{lem:derivative_Teichmuller}
Take $x=(X,f)\in \teich_{g,m}$ and $v_0\in T_x\teich_{g,m}-\{0\}$. Suppose that $v_0$ is represented by the Teichm\"uller differential $\beta\overline{\alpha_0}/|\alpha_0|$ ($\|\alpha_0\|=1$, $\beta>0$). Then
$$
\left.\dfrac{d}{dt}\right|_{t=0}\teichmullernorm{x}{v_0+tv}={\rm Re}\langle v,\alpha_0\rangle
$$
for any $v\in T_x\teich_{g,m}$.
\end{lemma}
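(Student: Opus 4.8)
The plan is to recognise $\teichmullernorm{x}{\cdot}$ as a supremum of linear functionals over a \emph{compact} set and to differentiate it by an envelope (Danskin) argument. Put $K=\{\varphi\in A^2(X):\|\varphi\|_1=1\}$; since $A^2(X)$ is finite-dimensional (of complex dimension $3g-3+m$), $K$ is compact, and
$$
f(t):=\teichmullernorm{x}{v_0+tv}=\max_{\varphi\in K}\bigl(a(\varphi)+t\,b(\varphi)\bigr),\qquad
a(\varphi)={\rm Re}\langle v_0,\varphi\rangle,\quad b(\varphi)={\rm Re}\langle v,\varphi\rangle,
$$
with $a,b$ continuous on $K$. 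Such an $f$ is convex, and its one-sided derivatives at $t=0$ are $f'_+(0)=\max_{\varphi\in M}b(\varphi)$ and $f'_-(0)=\min_{\varphi\in M}b(\varphi)$, where $M\subset K$ is the set of maximisers of $a$. Hence it suffices to prove that $M=\{\alpha_0\}$: then $f'_+(0)=f'_-(0)=b(\alpha_0)={\rm Re}\langle v,\alpha_0\rangle$, which is the assertion.

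That $\alpha_0\in M$ (and $\teichmullernorm{x}{v_0}=\beta$) is immediate from the representative $v_0=[\beta\,\overline{\alpha_0}/|\alpha_0|]$: for every $\varphi\in K$,
$$
a(\varphi)={\rm Re}\int_X\beta\,\frac{\overline{\alpha_0}}{|\alpha_0|}\,\varphi\ \le\ \beta\int_X|\varphi|=\beta,
$$
with equality at $\varphi=\alpha_0$ since the integrand is then $\beta|\alpha_0|\ge 0$. For uniqueness, suppose $\varphi\in K$ also satisfies $a(\varphi)=\beta$. Because $|(\overline{\alpha_0}/|\alpha_0|)\varphi|=|\varphi|$ pointwise and $\int_X|\varphi|=1$, equality in the displayed estimate forces the integrand $(\overline{\alpha_0}/|\alpha_0|)\varphi$ to be $\ge 0$ almost everywhere; equivalently, the meromorphic function $\varphi/\alpha_0$ on the closed surface underlying $X$ is $\ge 0$ almost everywhere. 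A nonconstant meromorphic function on a compact Riemann surface is surjective onto the Riemann sphere, so $\varphi/\alpha_0$ must be a nonnegative constant $c$, and $\|\varphi\|_1=1=c\,\|\alpha_0\|_1=c$. Thus $\varphi=\alpha_0$ and $M=\{\alpha_0\}$.

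For readers who prefer not to quote Danskin's lemma, the same two ingredients give the derivative by hand: attainment at $\alpha_0$ yields $f(t)\ge\beta+t\,{\rm Re}\langle v,\alpha_0\rangle$ for all $t$, whence $f'_+(0)\ge{\rm Re}\langle v,\alpha_0\rangle\ge f'_-(0)$; and if, say, $f'_+(0)>{\rm Re}\langle v,\alpha_0\rangle$, choosing $t_n\downarrow 0$ and near-maximisers $\varphi_n\in K$ of $a(\cdot)+t_n b(\cdot)$ and extracting a subsequence $\varphi_n\to\varphi_\infty\in K$ one gets $a(\varphi_\infty)=\beta$, hence $\varphi_\infty=\alpha_0$ by the uniqueness just proved, and then $b(\varphi_n)\to{\rm Re}\langle v,\alpha_0\rangle$ contradicts the choice of the $\varphi_n$; the left derivative is handled symmetrically. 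The one genuinely substantial point is thus the uniqueness of the maximiser, which is precisely the equality case of the triangle inequality for the $L^1$-pairing together with the rigidity of real-valued meromorphic functions on a compact Riemann surface; the remaining steps are routine convex analysis in the finite-dimensional space $A^2(X)$.
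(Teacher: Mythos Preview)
Your proof is correct. The essential idea---uniqueness of the maximiser $\alpha_0$ in $K=\{\|\varphi\|_1=1\}$ followed by a squeeze argument---is the same as the paper's, but the execution differs. The paper chooses for each $t$ the maximiser $\alpha_t$ of $\varphi\mapsto{\rm Re}\langle v_0+tv,\varphi\rangle$ and proves that $t\mapsto\alpha_t$ is continuous by showing that $\alpha\mapsto\bigl[\varphi\mapsto\|\alpha\|_1\int_X(\overline{\alpha}/|\alpha|)\varphi\bigr]$ is a homeomorphism $A^2(X)\to A^2(X)^*$, using injectivity plus invariance of domains; the derivative then drops out of the sandwich $t\,{\rm Re}\langle v,\alpha_0\rangle\le f(t)-f(0)\le t\,{\rm Re}\langle v,\alpha_t\rangle$. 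Your route via Danskin (or the direct compactness extraction you sketch) is more elementary: it avoids the topological detour through invariance of domains and isolates the one real point, namely that $M=\{\alpha_0\}$. You also supply the justification of this uniqueness---the equality case of the $L^1$ pairing, settled by observing that $\varphi/\alpha_0$ is a meromorphic function with values in $[0,\infty)$ and hence constant---whereas the paper merely asserts it (``One can see that\dots'').
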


\begin{proof}
For $t\in \mathbb{R}$, we take $\alpha_t\in A^2(X)$ with $\|\alpha_t\|_1=1$ such that 
$$
\teichmullernorm{x}{v_0+tv}={\rm Re}\langle v_0+tv,\alpha_t\rangle.
$$
We claim that $\mathbb{R}\ni t\mapsto \alpha_t\in A^2(X)$ is well defined  and continuous. Indeed, by the Lebesgue dominated convergence theorem, the map
\begin{equation}
\label{eq:linear}
L_1\colon A^2(X)\ni \alpha\mapsto \ell_\alpha=\left[A^2(X)\ni \varphi\mapsto \|\alpha\|_1\int_X\dfrac{\overline{\alpha}}{|\alpha|}\varphi\right]\in A^2(X)^*
\end{equation}
is continuous with respect to the weak topology on $A^2(X)^*$. Since $A^2(X)$ is finite-dimensional, the weak topology on $A^2(X)^*$ coincides with the topology derived from the dual norm (the operator norm). One can see that ${\rm Re}(\ell_\alpha(\psi))\le \|\alpha\|_1\|\psi\|_1$ for all $\psi\in A^2(X)$, and ${\rm Re}(\ell_\alpha(\varphi))=\|\alpha\|_1\|\varphi\|_1$ if and only if $\varphi=\alpha$. Hence, $\alpha_t$ is well defined for $t\in \mathbb{R}$, and the map defined in  \cref{eq:linear} is injective and proper. Since $A^2(X)$ and $A^2(X)^*$ are homeomorphic to the Euclidean space $\mathbb{R}^{6g-6+2m}$, from the invariance of domains, \cref{eq:linear} is homemorphic.
Since
\begin{equation}
\label{eq:linear2}
L_2\colon T_x\teich_{g,m}\ni v\mapsto [\varphi\mapsto \langle v,\varphi\rangle]\in A^2(X)^*
\end{equation}
is a complex linear isomorphism, and continuous with respect to the Teichm\"uller metric and the dual norm, we see that the map $\mathbb{R}\ni t\mapsto \alpha_t=L_1^{-1}\circ L_2(v_0+tv)\in A^2(X)$ is continuous.

Then,
\begin{align*}
\teichmullernorm{x}{v_0+tv}-\teichmullernorm{x}{v_0}
&={\rm Re}\langle v_0+tv,\alpha_t\rangle-{\rm Re}\langle v_0,\alpha_0\rangle \\
&\ge{\rm Re}\langle v_0+tv,\alpha_0\rangle-{\rm Re}\langle v_0,\alpha_0\rangle
=t\,{\rm Re}\langle v,\alpha_0\rangle
\end{align*}
and
\begin{align*}
\teichmullernorm{x}{v_0+tv}-\teichmullernorm{x}{v_0}
&\le {\rm Re}\langle v_0+tv,\alpha_t\rangle-{\rm Re}\langle v_0,\alpha_t\rangle \\
&=t{\rm Re}\langle v,\alpha_0\rangle
+t{\rm Re}\langle v,\alpha_t-\alpha_0\rangle \\
&=t\,{\rm Re}\langle v,\alpha_0\rangle
+o(t)
\end{align*}
as $t\to 0$. Therefore,
$$
|\teichmullernorm{x}{v_0+tv}-\teichmullernorm{x}{v_0}-t\,{\rm Re}\langle v,\alpha_0\rangle|=o(t)
$$
as $t\to 0$.
\end{proof}
\subsubsection{Measured foliations}
Let $\mathcal{S}$ be the set of homotopy classes of non-trivial and non-peripheral simple closed curves on $\Sigma_{g,m}$. Let $\mathcal{WS}$ be the set of formal scalar products $\{t\alpha\mid t\ge 0, \alpha\in \mathcal{S}\}$, which we call the set of \emph{weighted simple closed curves} on $\Sigma_{g,m}$. Consider the embedding
$$
\mathcal{WS}\ni t\alpha\mapsto [\mathcal{S}\ni \beta\mapsto t\,i(\alpha,\beta)]\in \mathbb{R}_{\ge 0}^{\mathcal{S}}.
$$
We equip the function space $\mathbb{R}_{\ge 0}^{\mathcal{S}}$ with the pointwise convergence topology.
The closure $\mathcal{MF}$ of the image of $\mathcal{WS}$ in $\mathbb{R}_{\ge 0}^{\mathcal{S}}$ is called the space of \emph{measured foliations} on $\Sigma_{g,m}$. 
For $F\in \mathcal{MF}$, we call the value $F(\alpha)$ the \emph{intersection number} of $F$ with $\alpha$, and denote it by $i(F,\alpha)$. Set $i(F,t\alpha)=t\,i(F,\alpha)$ for $t\alpha\in \mathcal{WS}$. It is known that the intersection number $i(\,\cdot\, ,\,\cdot\,)$ on  $\mathcal{MF}\times \mathcal{WS}$ extends continuously to a function $i(\,\cdot\, ,\,\cdot\,)$  on $\mathcal{MF}\times \mathcal{MF}$ which satisfies $i(F,G)=i(G,F)$ for $F,G\in \mathcal{MF}$.

\subsubsection{Hubbard--Masur differentials and Extremal length}
Let $x=(X,f)$ be a point in $\teich_{g,m}$. For $q=q(z)dz^2\in A^2(X)$, we set 
$$
v(q)(\alpha)=\inf_{\alpha'\in f(\alpha)}\int_{\alpha'}|{\rm Re}(\sqrt{q(z)}dz)|
$$
for $\alpha\in \mathcal{S}$.  Regarding $v(q)$ as contained in $\mathbb{R}_{\ge 0}^{\mathcal{S}}$, we call it the \emph{vertical foliation} of $q$. It is known that $v(q)\in \mathcal{MF}$.

For $x=(X,f)\in \teich_{g,m}$ and $F\in \mathcal{MF}$, there is a unique quadratic differential $q_{F,x}\in A^2(X)$ such that $i(F,\alpha)=v(q)(\alpha)$ for all $\alpha\in \mathcal{S}$. We call the differential $q_{F,x}$ the \emph{Hubbard--Masur differential} for $F$ on $x$. The norm
$$
\ext_x(F)=\int_{X}|q_{F,x}(z)|dxdy
$$
is called the \emph{extremal length} of $F$ on $x$. The extremal length function
$$
\teich_{g,m}\times \mathcal{MF}\ni (x,F)\mapsto \ext_x(F)
$$
is continuous.
When $F\in \mathcal{MF}$ is fixed, the extremal length function is of class $C^1$. The following formula, called the \emph{Gardiner formula},  is known:
\begin{equation}
\label{eq:Gardiner-formula}
d\,\ext_{\cdot}(F)=-2{\rm Re}\int_X\mu q_{F,x}
\end{equation}
for $v=[\mu]\in T_x\teich_{g,m}\cong L^\infty(X)/N^\infty(X)$ (cf. \cite{MR736212}). Notice that the minus sign in the right-hand side of \cref{eq:Gardiner-formula} comes from the fact that $q_{F,x}$ has $F$ as the vertical foliation, while Gardiner considers the horizontal foliations when he concludes the formula \cref{eq:Gardiner-formula}.

\subsection{Teichm\"uller--Randers metric}
For a given $n$-dimensional Riemannian manifold $(M,g)$ and a 1-form $\omega$ on $M$ with $\|\omega\|_g<1$ at every point of $M$, the associated \emph{Randers metric} is a Finsler metric on $M$ defined by $F(v)=g(v,v)^{1/2}+\omega(v)$.
Although in general literatures, Randers metrics refer to deformations of Riemannian metrics by 1-forms,  we can think of such deformations for Finsler (symmetric) metrics in the same way.
Furthermore, even in the case when $\|\omega\|_g=1$, the Randers metric makes sense as a weak Finsler metric.
In this paper, we study Randers-type deformations of the Teichm\"{u}ller metric $\kappa$ on $\teich_{g,m}$ which we explained in the previous subsection, by a 1-form $\omega$ as we presented in Introduction ;
%
\begin{equation}
\randersteichmullernorm{x}{v}{\omega}
=\teichmullernorm{x}{v}+\omega(v).
\end{equation}
As a $1$-form $\omega$, we shall consider in particular the form expressed as $-\frac{1}{2}d\,\log \ext_{(\cdot)}(F)$ for a measured foliation $F$ on $\Sigma_{g,m}$.
This metric does depends on the choice of $F$, but only on the projective class of $F$ since we are taking $\log$ in the second term.
This metric can be regarded as a generalisation of the weak Finsler metric which we studied in \cite{MOP}.
\subsection{References to  background materials}
We now give  some references for background materials which we briefly explained in this section. 
The Teichm\"uller metric was introduced and studied thoroughly by Teichm\"uller in his paper \cite{Teich} (see its English translation \cite{TeichTr}). 
In this paper there is a long section (\S 25) on the Finsler nature of this metric. 
In the same section, Teichm\"uller introduced and studied what are now called Teichm\"uller discs (isometric images of the hyperbolic plane, defined by quadratic differentials), which he calls complex geodesics. 
As modern introductions to Teichm\"uller theory, we refer the to \cite{MR903027} and \cite{MR1215481}. 
For the theory of measured foliations and measured foliation spaces, we refer the reader to \cite{MR568308}, and for a comprehensive introduction to extremal length \cite{MR3289702} for instance.
For Randers' metric, we refer the reader to Rander's original paper \cite{MR927291}.

\section{Extension of the Hamilton--Krushkal condition}

In this section, we discuss the infinitesimal extremal property for our Teichm\"{u}ller--Randers metric.

Let $X$ be a Riemann surface, and fix $\varphi_0\in A^2(X)$.
We consider the following functional on the space $L^\infty(X)$ of Beltrami differenitals:
\begin{equation}
\label{eq:functional}
\beta(\mu,\varphi_0)
=\sup\left\{
\left|\int_X\mu \varphi\right|+{\rm Re}\int_X\mu\varphi_0\mid \varphi\in A^2(X), \|\varphi\|_1=1.
\right\}
\end{equation}
for $\mu \in L^\infty(S)$.
It immediately follows from the definition that
\begin{equation}
\label{eq:beta_mu}
\beta(\mu,\varphi_0)\le \|\mu\|_\infty+{\rm Re}\int_X\mu\varphi_0
\end{equation}
for all $\mu\in L^\infty(X)$.
We say that a Beltrami differential  $\mu$ is \emph{infinitesimally $\varphi_0$-extremal} if
$$
\beta(\mu,\varphi_0)=\|\mu\|_\infty+{\rm Re}\int_X\mu\varphi_0,
$$
 and that $\mu$ satisfies the \emph{Hamilton condition} if 
$$
\sup\left\{\left|\int_X\mu\varphi\right|\mid \varphi\in A^2(X), \|\varphi\|_1\le 1\right\}=\|\mu\|_\infty.
$$
It is known that $\mu\in L^\infty(X)$
is \emph{infinitesimally Teichm\"uller extremal} in the sense that $\|\mu-\nu\|_\infty\ge \|\mu\|_\infty$ for all $\nu\in N^\infty(X)$ if and only if it satisfies the Hamilton condition (\cite{MR245787} and \cite{MR0241633}).
%
%
%

\begin{theorem}[Extension of the Hamilton--Krushkal condition]
\label{thm:extension_Hamilton_Krushkal_condition}
Let $X$ be a Riemann surface and $\varphi_0$ a holomorphic quadratic differential on $X$.
\begin{itemize}
\item[{\rm (1)}]
If two Beltrami differentials $\mu,\nu\in L^\infty(X)$ are infinitesimally Teichm\"uller equivalent, then $\beta(\mu,\varphi_0)=\beta(\nu,\varphi_0)$.
\item[{\rm (2)}] For a Beltrami differential $\mu\in L^\infty(X)$, the following three conditions are equivalent:
\begin{itemize}
\item[{\rm (a)}]
$\mu$ is \emph{infinitesimally $\varphi_0$-extremal};
\item[{\rm (b)}]
$\mu$ is \emph{infinitesimally Teichm\"uller extremal}; and
\item[{\rm (c)}]
$\mu$ satisfies the Hamilton condition.
\end{itemize}
%
\end{itemize}
\end{theorem}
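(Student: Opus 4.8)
The plan is to reduce everything to the classical Hamilton--Krushkal theory by analysing the functional $\beta(\mu,\varphi_0)$ carefully. For part (1), the key observation is that both summands in the definition of $\beta$ are well behaved under adding an element $\eta\in N^\infty(X)$. For the pairing term $\mathrm{Re}\int_X\mu\varphi_0$ this is immediate, since $\int_X\eta\varphi_0=0$ by definition of $N^\infty(X)$. The term $\sup\{|\int_X\mu\varphi| : \|\varphi\|_1=1\}$ is, however, \emph{not} invariant under $\mu\mapsto \mu+\eta$ pointwise — only the Hamilton--Krushkal sup (over the unit ball, with the essential-supremum comparison) is. So the first step is to rewrite
$$
\beta(\mu,\varphi_0)=\sup_{\|\varphi\|_1=1}\Bigl(\Bigl|\int_X\mu\varphi\Bigr|+\mathrm{Re}\int_X\mu\varphi_0\Bigr)
$$
and note that this does not obviously descend to the quotient. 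The way around this is to recognise $\beta(\mu,\varphi_0)$ as the dual norm / support-function expression associated to the convex body $\{\varphi_0+e^{i\theta}\varphi : \theta\in\reals, \|\varphi\|_1\le 1\}$ in $A^2(X)$, evaluated against the linear functional $\varphi\mapsto\mathrm{Re}\int_X\mu\varphi$ on $A^2(X)$; since this last functional depends on $\mu$ only through its class $[\mu]\in T_x\teich_{g,m}=L^\infty(X)/N^\infty(X)$ (this is exactly the non-degenerate pairing recalled in \cref{subsec:Teichmuller-theory}), part (1) follows. I would spell out that $|\int_X\mu\varphi|=\sup_\theta\mathrm{Re}\,e^{i\theta}\int_X\mu\varphi$ to make the reduction to real pairings clean.

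For part (2), the implication (a)$\Rightarrow$(c) is the substantive one, and I would prove the cycle (b)$\Leftrightarrow$(c), then (c)$\Rightarrow$(a), then (a)$\Rightarrow$(b). The equivalence (b)$\Leftrightarrow$(c) is exactly the Hamilton--Krushkal theorem cited from \cite{MR245787} and \cite{MR0241633}, so there is nothing to do there. For (c)$\Rightarrow$(a): if $\mu$ satisfies the Hamilton condition, pick a Hamilton sequence $\varphi_n\in A^2(X)$, $\|\varphi_n\|_1=1$, with $\int_X\mu\varphi_n\to\|\mu\|_\infty$; after rotating each $\varphi_n$ by a unimodular constant we may assume $\int_X\mu\varphi_n\to\|\mu\|_\infty$ is attained in the limit with argument $0$. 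Then
$$
\beta(\mu,\varphi_0)\ge \Bigl|\int_X\mu\varphi_n\Bigr|+\mathrm{Re}\int_X\mu\varphi_0 \longrightarrow \|\mu\|_\infty+\mathrm{Re}\int_X\mu\varphi_0,
$$
wait — this only works if the fixed term $\mathrm{Re}\int_X\mu\varphi_0$ does not get perturbed, which it does not since $\varphi_0$ is fixed; combined with the reverse inequality \cref{eq:beta_mu} this gives (a). For (a)$\Rightarrow$(b): here I would argue by contraposition using part (1). If $\mu$ is \emph{not} infinitesimally Teichm\"uller extremal, there is $\nu$ infinitesimally Teichm\"uller equivalent to $\mu$ with $\|\nu\|_\infty<\|\mu\|_\infty$; by part (1), $\beta(\mu,\varphi_0)=\beta(\nu,\varphi_0)\le\|\nu\|_\infty+\mathrm{Re}\int_X\nu\varphi_0=\|\nu\|_\infty+\mathrm{Re}\int_X\mu\varphi_0<\|\mu\|_\infty+\mathrm{Re}\int_X\mu\varphi_0$, so $\mu$ is not infinitesimally $\varphi_0$-extremal. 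This closes the cycle.

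The main obstacle I anticipate is the precise justification in part (1) that $\beta(\mu,\varphi_0)$ depends only on $[\mu]$: the naive term-by-term argument fails because $\sup_{\|\varphi\|_1=1}|\int_X\mu\varphi|$ is genuinely not a function of $[\mu]$ alone (it equals $\|\mu\|_\infty$ only when the Hamilton condition holds). The clean fix is to avoid splitting the supremum and instead interpret $\beta(\mu,\varphi_0)$ as the support functional of a \emph{fixed} compact convex subset of $A^2(X)$ (independent of $\mu$) paired against the $\mu$-dependent \emph{linear} functional $\varphi\mapsto\mathrm{Re}\int_X\mu\varphi$, which does descend to the quotient. Once that reformulation is in place the rest is routine, drawing on \cref{lem:derivative_Teichmuller} and the classical Hamilton--Krushkal theorem only as black boxes. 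A secondary minor point to be careful about is the rotation normalisation of Hamilton sequences, and the fact that the supremum in \cref{eq:functional} may not be attained, so all arguments should be phrased with sequences and limits rather than maximisers.
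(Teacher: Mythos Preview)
Your argument for part~(2) is correct and essentially identical to the paper's: the paper also uses part~(1) together with \cref{eq:beta_mu} to obtain (a)$\Rightarrow$(b), and a Hamilton sequence to obtain (b)$\Rightarrow$(a) (equivalently your (c)$\Rightarrow$(a)), citing the classical Hamilton--Krushkal theorem for (b)$\Leftrightarrow$(c). Your contrapositive phrasing of (a)$\Rightarrow$(b) is a trivial repackaging of the paper's direct argument.

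For part~(1), however, you have talked yourself into a difficulty that does not exist. You assert that $\sup_{\|\varphi\|_1=1}\bigl|\int_X\mu\varphi\bigr|$ ``is genuinely not a function of $[\mu]$ alone'', but this is false: if $\mu-\nu\in N^\infty(X)$ then by definition $\int_X\mu\varphi=\int_X\nu\varphi$ for \emph{every} $\varphi\in A^2(X)$, hence $\bigl|\int_X\mu\varphi\bigr|=\bigl|\int_X\nu\varphi\bigr|$ for every such $\varphi$, and the supremum over the unit sphere is unchanged. (You appear to have conflated this supremum---which is exactly the Teichm\"uller norm $\teichmullernorm{x}{[\mu]}$ and manifestly depends only on $[\mu]$---with $\|\mu\|_\infty$, which does not.) Accordingly, the paper's proof of (1) is a one-liner: the expression inside the supremum defining $\beta(\cdot,\varphi_0)$ coincides for $\mu$ and $\nu$ pointwise in $\varphi$, so the suprema agree. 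Your support-functional reformulation is correct and reaches the same conclusion, but the detour is unnecessary; indeed your own reformulation already relies on the fact that the linear functional $\varphi\mapsto\int_X\mu\varphi$ descends to the quotient, and $\bigl|\int_X\mu\varphi\bigr|$ is just its modulus, so there is no obstruction to the ``na\"{\i}ve'' argument. Finally, \cref{lem:derivative_Teichmuller} plays no role in this theorem.
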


\begin{proof}
(1)\quad The assumption that $\mu$ and $\nu$ are Teichm\"{u}ller equivalent means by definition that $\int_X\mu\varphi=\int_X\nu\varphi$ for all $\varphi\in A^2(X)$. 
Hence, we have
$$
\left|\int_X\mu \varphi\right|+{\rm Re}\int_X\mu\varphi_0
=
\left|\int_X\nu \varphi\right|+{\rm Re}\int_X\nu\varphi_0
$$
for all $\varphi\in A^2(X)$, which implies $\beta(\mu,\varphi_0)=\beta(\nu,\varphi_0)$.

\medskip
\noindent
(2)\quad We only need to show the equivalence between conditions (a) and (b), for the equivalence of the condition (c) with (a) and (b)  follows immediately then.
Suppose that $\mu$ is infinitesimally $\varphi_0$-extremal. Then for $\nu\in N^\infty(X)$, we have
\begin{align*}
&\|\mu\|_\infty+{\rm Re}\int_X\mu\varphi_0
=\beta(\mu,\varphi_0) =\beta(\mu-\nu,\varphi_0) \\
&\le \|\mu-\nu\|_\infty+{\rm Re}\int_X(\mu-\nu)\varphi_0 = \|\mu-\nu\|_\infty+{\rm Re}\int_X\mu\varphi_0,
\end{align*}
and hence $\|\mu\|_\infty\le \|\mu-\nu\|_\infty$. This means that $\mu$ is infinitesimally Teichm\"uller extremal.

Conversely, suppose that $\mu$ is infinitesimally Teichm\"uller extremal. Then, by definition, there exists a sequence $(\varphi_n)$ in $A^2(X)$ such that $|\int_X\mu\varphi_n|\to \|\mu\|_\infty$. Therefore,
$$
\|\mu\|_\infty+{\rm Re}\int_X\mu\varphi_0
=\lim_{n\to \infty}\left|\int_X\mu\varphi_n\right|+{\rm Re}\int_X\mu\varphi_0
\le \beta(\mu,\varphi_0).
$$
Combining this with  \cref{eq:beta_mu}, we see that $\mu$ is infinitesimally $\varphi_0$-extremal.
%
\end{proof}
%
In the case of analytically finite Riemann surfaces, an infinitesimally Teichm\"uller extremal Beltrami differential is a Teichm\"uller Beltrami differential, and vice versa. Hence we have the following.

\begin{corollary}[Analytically finite case]
\label{coro:teichmuller_differenital}
Let $X$ be an analytically finite Riemann surface and $\varphi_0$ a holomorphic quadratic differential on $X$.
Then, for $\mu\in L^\infty(X)$, the following two conditions are equivalent.
\begin{itemize}
\item[{\rm (a)}]
$\mu$ is infinitesimally $\varphi_0$-extremal;  and
\item[{\rm (b)}]
$\mu$ is a Teichm\"uller Beltrami differential. Namely, there are $\psi\in A^2(X)-\{0\}$ and $c\ge 0$ such that $\mu=c\overline{\psi}/|\psi|$.
\end{itemize}
\end{corollary}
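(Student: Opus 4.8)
The plan is to deduce \cref{coro:teichmuller_differenital} directly from \cref{thm:extension_Hamilton_Krushkal_condition}. By part (2) of that theorem, the condition (a) that $\mu$ is infinitesimally $\varphi_0$-extremal is equivalent to the condition (b) that $\mu$ is infinitesimally Teichm\"uller extremal, and also to the Hamilton condition (c), and this equivalence holds for an arbitrary Riemann surface $X$. Hence it suffices to show that, \emph{when $X$ is analytically finite}, a Beltrami differential $\mu\in L^\infty(X)$ satisfies the Hamilton condition if and only if it has the Teichm\"uller form $\mu=c\overline{\psi}/|\psi|$ with $\psi\in A^2(X)-\{0\}$ and $c\ge 0$. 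This last equivalence is classical, but I would include a short self-contained argument for completeness.

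For the implication from the Teichm\"uller form to the Hamilton condition: if $\mu=c\overline{\psi}/|\psi|$ with $\|\psi\|_1=1$ and $c\ge 0$, then $\|\mu\|_\infty=c$ and $\int_X\mu\psi=c\int_X|\psi|\,dxdy=c=\|\mu\|_\infty$, so the supremum defining the Hamilton condition is attained by $\psi$; the case $c=0$ is trivial since then $\mu=0$. Conversely, suppose $\mu\ne 0$ satisfies the Hamilton condition (the zero differential already being of the required form with $c=0$). Choose $\varphi_n\in A^2(X)$ with $\|\varphi_n\|_1=1$ and $\left|\int_X\mu\varphi_n\right|\to\|\mu\|_\infty$. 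Here analytic finiteness enters: $A^2(X)$ is finite-dimensional, so the unit sphere $\{\varphi\in A^2(X)\mid\|\varphi\|_1=1\}$ is compact, and after passing to a subsequence $\varphi_n\to\psi$ with $\|\psi\|_1=1$ and $\left|\int_X\mu\psi\right|=\|\mu\|_\infty$. Replacing $\psi$ by a unimodular multiple, we may assume $\int_X\mu\psi=\|\mu\|_\infty$. Since ${\rm Re}(\mu(z)\psi(z))\le|\mu(z)|\,|\psi(z)|\le\|\mu\|_\infty\,|\psi(z)|$ for almost every $z$, integrating and using $\int_X|\psi|\,dxdy=1$ forces equality almost everywhere, so $\mu(z)\psi(z)=\|\mu\|_\infty\,|\psi(z)|$ a.e.; as $\psi$ is a nonzero holomorphic quadratic differential its zero set is discrete, hence $\mu=\|\mu\|_\infty\,\overline{\psi}/|\psi|$ a.e., which is the desired form with $c=\|\mu\|_\infty>0$.

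I expect the only genuine subtlety to be the use of analytic finiteness: it is precisely what guarantees that the supremum in the Hamilton condition is \emph{realised} by some $\psi\in A^2(X)$ rather than merely approached, and this is what is false in general — on an infinite-type surface an infinitesimally Teichm\"uller extremal Beltrami differential need not be of Teichm\"uller form, which is exactly why the corollary is restricted to the analytically finite case. The remaining steps — upgrading equality of the integrals to equality of the integrands a.e., and discarding the (null) zero set of $\psi$ — are routine measure theory once that compactness has been secured.
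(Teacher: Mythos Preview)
Your proposal is correct and follows essentially the same route as the paper: the paper deduces the corollary from \cref{thm:extension_Hamilton_Krushkal_condition} together with the classical fact that on an analytically finite surface an infinitesimally Teichm\"uller extremal Beltrami differential is a Teichm\"uller differential, stating the latter without proof. You take the same reduction and additionally supply a clean self-contained argument for that classical equivalence via compactness of the unit sphere in $A^2(X)$, which is a welcome expansion but not a different approach.
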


%
%

\subsection{Teichm\"{u}ller--Randers cometric}
Let $x=(X,f)$ be a point in $\teich_{g,m}$, and $\omega$ a $1$-form on $\teich_{g,m}$ with $\Vert \omega\Vert_T(x) \leq 1$. 
We define the \emph{Teichm\"uller--Randers cometric}  on the space of holomorphic quadratic differentials, which identified with the cotangent space as $A^2(X)\cong T^*_x\teich_{g,m}$, by
$$
G_\omega(\varphi)=\sup_{\randersteichmullernorm{x}{v}{\omega}=1}|\langle v,\varphi\rangle|=\sup_{\randersteichmullernorm{x}{v}{\omega}=1}{\rm Re}\langle v,\varphi\rangle
$$
for $\varphi\in A^2(X)$.
This is dual to the Randers --Teichm\"uller metric. When $\omega=0$, it is known that
$$
G_0(\varphi)=\|\varphi\|_1.
$$
Even if $\|\omega\|_T(x)<1$, as we have seen above, $G_\omega$ defines an (asymmetric) norm on $A^2(X)$, whereas $G_\omega$ is not a norm then. 
Indeed, take a tangent vector $v\in T_x\teich_{g,m}$ with $\teichmullernorm{x}{v}=1$ and $\omega(v)=-1$. Then, $\randersteichmullernorm{x}{v}{\omega}=0$ by definition of $\kappa^\omega$. Take $\alpha\in A^2(X)$ such that ${\rm Re}\langle v,\alpha\rangle=\|\alpha\|_1=1$, and $\{v_n\}\subset T_x\teich_{g,m}$ converging to $v$. Then, we have
$$
\dfrac{{\rm Re}\langle v_n,\alpha\rangle}{\randersteichmullernorm{x}{v_n}{\omega}}\to \infty
$$
as $n\to \infty$, which shows that $G_\omega$ is not a norm. For this reason, when we discuss the dual $G_\omega$, we always assume that $\|\omega\|_T(x)< 1$.


\begin{theorem}[Teichm\"uller--Randers cometric]
\label{thm:Randers--Teichmuller-cometric}
Let $x=(X,f)$ be a point in $teich_{g,m}$. Suppose that $\omega$ is represented by $\psi\in A^2(X)\cong T^*_x\teich_{g,m}$ at $x$ and 
 that $\|\omega\|_T(x)=\|\psi\|_1<1$. Then,
\begin{equation}
\label{eq:cometric}
G_\omega(\varphi)=\inf\left\{t>0 \mid \left\|\frac{\varphi}{t}-\psi\right\|_1\le 1\right\}.
\end{equation}
In particular, if $\varphi\ne 0$, then we have $$\left\|\frac{\varphi}{G_\omega(\varphi)}-\psi\right\|_1=1.$$
\end{theorem}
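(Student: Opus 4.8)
The goal is to compute the dual norm $G_\omega(\varphi)=\sup\{{\rm Re}\langle v,\varphi\rangle : \teichmullernorm{x}{v}+{\rm Re}\langle v,\psi\rangle=1\}$ and to identify it with the Minkowski-type functional on the right-hand side of \cref{eq:cometric}.  The natural approach is to pass through the level sets.  Since $\randersteichmullernorm{x}{v}{\omega}=\teichmullernorm{x}{v}+{\rm Re}\langle v,\psi\rangle$ is positively homogeneous of degree one in $v$, by the usual homogeneity argument one may replace the constraint $\randersteichmullernorm{x}{v}{\omega}=1$ by $\randersteichmullernorm{x}{v}{\omega}\le 1$ in the definition of $G_\omega$, provided we check that the functional $v\mapsto {\rm Re}\langle v,\varphi\rangle$ is not bounded away from its supremum on the open sublevel set; this is where $\|\psi\|_1<1$ is used, guaranteeing that $\randersteichmullernorm{x}{v}{\omega}>0$ for $v\ne 0$ and hence that the unit ball $B=\{v:\randersteichmullernorm{x}{v}{\omega}\le 1\}$ is a genuine (compact, convex, $0$ in interior) convex body.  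Then $G_\omega$ is precisely the support function of $B$.

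**Key steps.**  First I would record that $B=\{v: \teichmullernorm{x}{v}\le 1-{\rm Re}\langle v,\psi\rangle\}$.  Second, the crucial reformulation: the support function of $B$, evaluated at $\varphi$, equals $\inf\{t>0: \varphi/t \in B^\circ\}$ where $B^\circ$ is the polar body --- but more directly, $G_\omega(\varphi)=\inf\{t>0: {\rm Re}\langle v,\varphi\rangle\le t \text{ for all }v\in B\}$.  Now for fixed $t>0$, the condition ``${\rm Re}\langle v,\varphi/t\rangle\le 1$ for all $v$ with $\teichmullernorm{x}{v}+{\rm Re}\langle v,\psi\rangle\le 1$'' says exactly that ${\rm Re}\langle v,\varphi/t-\psi\rangle\le \teichmullernorm{x}{v}\cdot\bigl(\text{something}\bigr)$; I would argue that, after the substitution $\eta=\varphi/t-\psi$, the inequality ${\rm Re}\langle v,\varphi/t\rangle\le 1$ for all $v\in B$ is equivalent to ${\rm Re}\langle v,\eta\rangle\le \teichmullernorm{x}{v}$ for all $v$, i.e.\ to $G_0(\eta)=\|\eta\|_1\le 1$.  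To see this equivalence: given $v$, set $\lambda=\teichmullernorm{x}{v}+{\rm Re}\langle v,\psi\rangle>0$; then $v/\lambda\in B$, so ${\rm Re}\langle v/\lambda,\varphi/t\rangle\le 1$, i.e.\ ${\rm Re}\langle v,\varphi/t\rangle\le\teichmullernorm{x}{v}+{\rm Re}\langle v,\psi\rangle$, which rearranges to ${\rm Re}\langle v,\eta\rangle\le\teichmullernorm{x}{v}$; conversely running this backwards.  Taking the supremum over $\teichmullernorm{x}{v}=1$ and using the known fact $G_0(\eta)=\|\eta\|_1$ gives $\|\varphi/t-\psi\|_1\le 1$.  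Hence $G_\omega(\varphi)=\inf\{t>0:\|\varphi/t-\psi\|_1\le 1\}$, which is \cref{eq:cometric}.

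**The ``in particular'' clause.**  For $\varphi\ne 0$, write $t_0=G_\omega(\varphi)$.  The map $t\mapsto \|\varphi/t-\psi\|_1$ is continuous on $(0,\infty)$, strictly decreasing in a neighbourhood of $t_0$ (since $\|\varphi/t-\psi\|_1\ge \|\varphi/t\|_1-\|\psi\|_1=\|\varphi\|_1/t-\|\psi\|_1\to\infty$ as $t\to 0^+$, and $\to\|\psi\|_1<1$ as $t\to\infty$), so the infimum defining $t_0$ is attained and the value at $t_0$ is exactly $1$ --- it cannot be $<1$, else by continuity $\|\varphi/(t_0-\epsilon)-\psi\|_1\le 1$ for small $\epsilon$, contradicting minimality of $t_0$; and it cannot be $>1$ by definition of the infimum together with continuity.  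Therefore $\|\varphi/G_\omega(\varphi)-\psi\|_1=1$.

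**Main obstacle.**  The routine part is the homogeneity bookkeeping and the $t\to 0^+$, $t\to\infty$ asymptotics.  The step requiring genuine care is justifying that the constraint set $\{\randersteichmullernorm{x}{v}{\omega}=1\}$ may be enlarged to the sublevel set and that the supremum behaves well --- this is exactly where the strict inequality $\|\psi\|_1<1$ is essential (it forces $\randersteichmullernorm{x}{v}{\omega}$ to be a bona fide asymmetric norm with compact unit ball, so no escape to infinity), and the excerpt already flagged, via the counterexample with $\omega(v)=-1$, that the hypothesis $\|\omega\|_T(x)<1$ cannot be dropped.  I would make the convexity/compactness of $B$ explicit and invoke the standard duality between a convex body's Minkowski functional and its support function, with the known identity $G_0=\|\cdot\|_1$ supplying the base case $\psi=0$.
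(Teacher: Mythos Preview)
Your argument is correct and complete; the rescaling step that converts the constraint $\randersteichmullernorm{x}{v}{\omega}\le 1$ into the dual condition $\|\varphi/t-\psi\|_1\le 1$ is the heart of the matter, and your justification of the equivalence (normalising an arbitrary $v$ by $\lambda=\teichmullernorm{x}{v}+{\rm Re}\langle v,\psi\rangle>0$) is clean.  The only cosmetic point is that the phrase ``strictly decreasing in a neighbourhood of $t_0$'' is neither needed nor quite established; your subsequent continuity argument already suffices.

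Your route is genuinely different from the paper's.  The paper argues variationally: it rewrites $G_\omega(\varphi)=\sup_{\teichmullernorm{x}{v}=1}{\rm Re}\langle v,\varphi\rangle/(1+{\rm Re}\langle v,\psi\rangle)$, takes a maximiser $v_0$ represented by a Teichm\"uller differential $\overline{\alpha_0}/|\alpha_0|$, and then uses the differentiability of the Teichm\"uller norm (\cref{lem:derivative_Teichmuller}) as a Lagrange-multiplier condition to deduce that $\varphi/G_\omega(\varphi)-\psi$ is a real multiple of $\alpha_0$; pairing with $v_0$ pins the multiple down to $1$, yielding $\|\varphi/G_\omega(\varphi)-\psi\|_1=\|\alpha_0\|_1=1$ directly.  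Your approach trades this calculus for pure convex duality, invoking only the known identity $G_0=\|\cdot\|_1$ and homogeneity; it is more elementary and would work verbatim for any Randers deformation of a Finsler norm whose dual is known.  The paper's approach, by contrast, is more constructive---it actually identifies the extremal direction $\alpha_0$ as $\varphi/G_\omega(\varphi)-\psi$---and exploits the Teichm\"uller-specific machinery (\cref{coro:teichmuller_differenital} and \cref{lem:derivative_Teichmuller}) developed earlier in the paper.
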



\begin{proof}
By  \cref{coro:teichmuller_differenital}, for a Teichm\"uller Beltrami differential $\mu=\overline{\alpha}/|\alpha|$ ($\alpha\in A^2(X)$),  we have
\begin{equation}
\label{eq:randers_teichmuller_beltrami}
\randersteichmullernorm{x}{[\mu]}{\omega}=1+{\rm Re}\int_X\dfrac{\overline{\alpha}}{|\alpha|}\psi.
\end{equation}
We may assume that neither $\varphi$ nor $\psi$ is $0$, for our claim evidently holds if one of them is $0$.
%
%
From the definition of $G_\omega$ and \cref{eq:randers_teichmuller_beltrami}, we have
\begin{equation}
\label{eq:G_omega_2}
G_\omega(\varphi)=\sup_{\teichmullernorm{x}{v}=1}\dfrac{{\rm Re}\langle v,\varphi\rangle}{1+{\rm Re}\langle v,\psi\rangle}.
\end{equation}
If $v$ is represented by  the Teichm\"uller Beltrami differential $\overline{\varphi}/{|\varphi|}$, 
the function in the supremum in the right-hand side of \cref{eq:G_omega_2} is positive. Hence we have $G_\omega(\varphi)>0$.

Let $v_0$ be a tangent vector represented by a  Teichm\"uller Beltrami differential $\overline{\alpha_0}/|\alpha_0|$ ($\|\alpha_0\|_1=1$) which attains the supremum in the right-hand side of \cref{eq:G_omega_2}.
Then
$$
G_\omega(\varphi)=\dfrac{{\rm Re}\langle v_0,\varphi\rangle}{1+{\rm Re}\langle v_0,\psi\rangle}.
$$
We note that ${\rm Re}\langle v_0,\varphi\rangle> 0$ and $|\langle v_0,\psi\rangle|<1$ since $G_\omega(\varphi)> 0$ and $\|\psi\|_1<1$. 
For $v\in T_x\teich_{g,m}$, we can compute
\begin{equation*}
\label{eq:proof_G_omega_1}
\dfrac{\displaystyle{\rm Re}\langle v_0+tv,\varphi\rangle}{\displaystyle1+{\rm Re}\langle v_0+tv,\psi\rangle}
=G_\omega(\varphi)+tG_\omega(\varphi){\rm Re}\left\langle v,
\dfrac{\varphi}{{\rm Re}\langle v_0,\varphi\rangle}-\dfrac{\psi}{1+{\rm Re}\langle v_0,\psi\rangle}\right\rangle+o(t)
\end{equation*}
as $t\to 0$.
As remarked above, $G_\omega(\varphi) >0$.
Since the left hand side of the above equality attains the supremum in $\{v\mid \kappa(x;v)=1\}$ at $v_0$, by \cref{lem:derivative_Teichmuller},
we have
$$
{\rm Re}\left\langle v,
\dfrac{\varphi}{{\rm Re}\langle v_0,\varphi\rangle}-\dfrac{\psi}{1+{\rm Re}\langle v_0,\psi\rangle}\right\rangle=0
$$
for all $v\in T_x\teich_{g,m}$ with ${\rm Re}\langle v,\alpha_0\rangle=0$. 
This means that there exists  $t\in \mathbb{R}$ such that
$$
\dfrac{\varphi}{{\rm Re}\langle v_0,\varphi\rangle}-\dfrac{\psi}{1+{\rm Re}\langle v_0,\psi\rangle}=t\alpha_0.
$$
By taking  pairing with $v_0$ on both sides, we get
$$
\dfrac{{\rm Re}\langle v_0,\varphi\rangle}{{\rm Re}\langle v_0,\varphi\rangle}-\dfrac{{\rm Re}\langle v_0,\psi\rangle}{1+{\rm Re}\langle v_0,\psi\rangle}=t\|\alpha_0\|_1=t,
$$
which means  $t=(1+{\rm Re}\langle v_0,\psi\rangle)^{-1}$. Thus we obtain
$$
\dfrac{\varphi}{G_\omega(\varphi)}-\psi=
\dfrac{1+{\rm Re}\langle v_0,\psi\rangle}{{\rm Re}\langle v_0,\varphi\rangle}\varphi-\psi=\alpha_0,
$$
which implies the desired equalities.
\end{proof}

\subsection{The case when $\omega$ is exact}
Assume that $\omega$ is a continuous exact form, 
that is, $\omega=dF_\omega$ for some $C^1$-function $F_\omega$ on $\teich_{g,m}$.
Then, the length of any $C^1$-path $\gamma\colon [a,b]\to \teich_{g,m}$ is expressed as
\begin{align*}
\int_a^b\randersteichmullernorm{\gamma(t)}{\dot{\gamma}(t)}{\omega}dt
&=\int_a^b(\teichmullernorm{\gamma(t)}{\dot{\gamma}(t)}+\omega(\dot{\gamma}(t)))
dt \\
&=\int_a^b\teichmullernorm{\gamma(t)}{\dot{\gamma}(t)}dt+F_\omega(\gamma(b))-F_\omega(\gamma(a))
\end{align*}
with respect to the Teichm\"{u}ller--Randers metric $\kappa^\omega$.
Therefore, taking the infimum on the lengths of paths connecting $x_1\in \teich_{g,m}$ to $x_2\in \teich_{g,m}$, 
the weak metric $\delta^\omega_T$ associated with the Teichm\"{u}ller--Randers metric
satisfies
\begin{equation}
\label{exact case}
\delta^\omega_T(x_1,x_2)=d_T(x_1,x_2)+F_\omega(x_2)-F_\omega(x_1).
\end{equation}
This inequality implies the following proposition.

\begin{proposition}
\label{prop:geodesic}
For any continuous exact form $\omega$ on $\teich_{g,m}$,
any Teichm\"uller geodesic is a unique geodesic with respect to the Teichm\"{u}ller--Randers distance $\delta^\omega_T$.
\end{proposition}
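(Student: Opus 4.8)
The key identity to exploit is \cref{exact case}, which expresses the Teichm\"uller--Randers weak distance as the Teichm\"uller distance plus a ``coboundary'' term:
\[
\delta^\omega_T(x_1,x_2)=d_T(x_1,x_2)+F_\omega(x_2)-F_\omega(x_1).
\]
The plan is to deduce both the geodesic property and the uniqueness directly from this formula, using the fact (recalled in \cref{subsec:Teichmuller-theory}) that $(\teich_{g,m},d_T)$ is a uniquely geodesic metric space. First I would fix $x_1,x_2\in\teich_{g,m}$ and let $\gamma\colon[a,b]\to\teich_{g,m}$ be the Teichm\"uller geodesic from $x_1$ to $x_2$, parametrised (say) proportionally to $d_T$-arclength. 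For any subinterval $[s,s']\subseteq[a,b]$, the restriction $\gamma|_{[s,s']}$ is the Teichm\"uller geodesic from $\gamma(s)$ to $\gamma(s')$, so applying \cref{exact case} on each of $[a,s]$, $[s,s']$, $[s',b]$ and using additivity of $d_T$ along $\gamma$ together with the telescoping cancellation of the $F_\omega$ terms, one gets
\[
\delta^\omega_T(x_1,\gamma(s))+\delta^\omega_T(\gamma(s),\gamma(s'))+\delta^\omega_T(\gamma(s'),x_2)=\delta^\omega_T(x_1,x_2).
\]
This is exactly the statement that $\gamma$ realises the distance and is a geodesic for $\delta^\omega_T$ (it is efficient on every subarc), so $\gamma$ is a $\delta^\omega_T$-geodesic from $x_1$ to $x_2$.

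For uniqueness, suppose $\eta\colon[0,L]\to\teich_{g,m}$ is any $\delta^\omega_T$-geodesic from $x_1$ to $x_2$; I want to show $\eta$ is (a reparametrisation of) the Teichm\"uller geodesic. Applying \cref{exact case} to the pair $(\eta(s),\eta(s'))$ for $0\le s\le s'\le L$ and using the triangle-type additivity that a geodesic enjoys, the $F_\omega$ contributions again telescope, and one finds that $d_T(\eta(0),\eta(s))+d_T(\eta(s),\eta(s'))+d_T(\eta(s'),\eta(L))=d_T(x_1,x_2)$; that is, $\eta$ also realises the Teichm\"uller distance between its endpoints along each subarc. Since $(\teich_{g,m},d_T)$ is uniquely geodesic, the image of $\eta$ coincides with the image of the Teichm\"uller geodesic $\gamma$, and $\eta$ is a monotone reparametrisation of it. Hence the Teichm\"uller geodesic is the unique $\delta^\omega_T$-geodesic between $x_1$ and $x_2$.

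I expect the main point requiring care to be the bookkeeping that turns \cref{exact case} into the additivity statements above: one has to be careful that $\delta^\omega_T$ is only a weak (asymmetric, possibly non-separating) metric, so ``geodesic'' must be interpreted as a path along which the weak distance is additive on subarcs, and the parametrisation of $\eta$ is not canonical. A minor subtlety is to confirm that \cref{exact case} is an equality (as stated) and not merely an inequality — this is guaranteed because the $F_\omega$ term is a genuine exact boundary term computed along any path, so the infimum defining $\delta^\omega_T$ reduces exactly to the infimum defining $d_T$ shifted by $F_\omega(x_2)-F_\omega(x_1)$. Given the equality, everything else is the elementary observation that adding a coboundary to a uniquely geodesic (weak) metric preserves both the geodesics and their uniqueness.
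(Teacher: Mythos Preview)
Your proposal is correct and uses essentially the same approach as the paper: both rely on the identity \cref{exact case} together with the fact that $(\teich_{g,m},d_T)$ is uniquely geodesic, so that adding the coboundary $F_\omega(x_2)-F_\omega(x_1)$ preserves geodesics and their uniqueness. The only cosmetic difference is that the paper phrases uniqueness at the level of Finsler path-lengths (a non-Teichm\"uller $C^1$-path has strictly larger $\kappa$-length, hence strictly larger $\kappa^\omega$-length), whereas you phrase it at the level of distance-additivity along subarcs; these are equivalent formulations of the same argument.
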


This gives a generalisation of Theorem 2.1 in \cite{MOP}.
We also note that in the case when $\omega$ is exact, the symmetrisation of the weak metric associated with the Teichm\"{u}ller--Randers metric coincides with the Teichm\"uller distance. Indeed, we have
\begin{align*}
&S(\delta^\omega_T)(x_1,x_2)
=\dfrac{1}{2}(\delta^\omega_T(x_1,x_2)+\delta^\omega_T(x_2,x_1)) \\
&=\dfrac{1}{2}(d_T(x_1,x_2)+(F_\omega(x_2)-F_\omega(x_1))+
d_T(x_2,x_1)+(F_\omega(x_1)-F_\omega(x_2)))\\
&=\dfrac{1}{2}(d_T(x_1,x_2)+
d_T(x_2,x_1))=d_T(x_1,x_2).
\end{align*}

%

\begin{proof}[Proof of \cref{prop:geodesic}]
By \cref{exact case}, we see immediately that every Teichm\"{u}ller geodesic is also a geodesic with respect to $\delta_T^\omega$.
It remains to check the uniqueness of geodesics.
Let $x_1,x_2\in \teich_{g,m}$ and $\gamma\colon[a,b]\to \teich_{g,m}$ be a $C^1$-path connecting $x_1$ to $x_2$.
If $\gamma$ is not a Teichm\"uller geodesic, by the uniqueness of Teichm\"{u}ller geodesics, we have
\begin{align*}
&d_T(x_1,x_2)+F_\omega(x_2)-F_\omega(x_1) \\
&<\int_a^b(\teichmullernorm{\gamma(t)}{\dot{\gamma}(t)}dt
+F_\omega(x_2)-F_\omega(x_1)
=\int_a^b\randersteichmullernorm{\gamma(t)}{\dot{\gamma}(t)}{\omega}dt,
\end{align*}
which implies that $\gamma$ is not a geodesic with respect to $\delta_T^\omega$ either.
\end{proof}

%
%
%
%
%

\section{Proof of theorems}
\subsection{Teichm\"uller discs}
Let $x=(X,f)$ be a point in $\teich_{g,m}$.
For $q\in A^2(X)$ and $\lambda\in \mathbb{H}$, we define
\begin{equation}
\label{eq:mu-q-lambda}
\mu_{\lambda,q}:=\dfrac{\lambda-i}{\lambda+i}\dfrac{\overline{q}}{|q|}.
\end{equation}
Let $f_{\lambda,q}$ be a quasi-conformal map on $X$ with $\bar \partial f_{\lambda, q}=\mu_{\lambda, q} \partial f_{\lambda, q}$, and set $X_{\lambda,q}$ to be the image of $f_{\lambda,q}$.
The \emph{Teichm\"uller disc} associated with $q$, which is denoted by $\mathbb D_q$, is a holomorphic disc in $\teich_{g,m}$ defined by
\begin{equation}
\label{eq:Teichmuller-discs}
\phi_q\colon\mathbb{H}\ni\lambda\mapsto x(\lambda,q):=(X_{\lambda,q},f_{\lambda,q})\in\teich_{g,m}.
\end{equation}

The following lemma shows basic properties of Teichm\"{u}ller discs.
\begin{lemma}
\label{lem:extremal_length_Teich}
For $x=(X,f)\in\teich_{g,m}$ and  a measured foliation $F$ on $S$, we have the following.
\begin{itemize}
\item[{\rm (a)}]
 The extremal length function satisfies
\begin{equation}
\label{eq:extremal_length_disc}
\ext_{x(\lambda,q_{F,x})}(F)=\dfrac{1}{{\rm Im}(\lambda)}\ext_x(F).
\end{equation}
\item[{\rm (b)}]
For the Teichm\"uller disc defined as in \cref{eq:Teichmuller-discs} for $q=q_{F,x}$, the image of any vertical geodesic line in $\mathbb{H}$ is the  Teichm\"uller geodesic defined by  holomorphic quadratic differentials whose vertical foliations are $F$.
\item[{\rm (c)}]
For any measured foliation $F$ on $S$ and any $\lambda\in\mathbb{H}$, the unit tangent vector $v_\lambda=(\phi_{q_{F,x}})_*\left(2i{\rm Im}(\lambda)\,\partial/\partial\lambda\right)$ is represented by $\overline{q_{F,\phi(\lambda)}}/|q_{F,\phi(\lambda)}|$.
\end{itemize}
\end{lemma}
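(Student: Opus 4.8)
The plan is to prove (a) by a direct computation on the flat structure determined by $q_{F,x}$, then to deduce (c) from (a) via the Gardiner formula, and finally to read off (b) from (c).

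\smallskip

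\emph{Part (a).} Set $q=q_{F,x}$ and $D(\lambda)=(\lambda-i)/(\lambda+i)$, so $|D(\lambda)|<1$. In a natural parameter $\zeta$ of $q$ — a chart, away from the finitely many zeros of $q$, in which $q=d\zeta^{2}$ — the Teichm\"uller map $f_{\lambda,q}$ is, up to post-composition by a conformal map, the restriction of the real-affine map $\zeta\mapsto w:=\zeta+D(\lambda)\bar\zeta$; hence its terminal quadratic differential $q_{\lambda}:=(f_{\lambda,q})_{*}q$ on $X_{\lambda,q}$ is the holomorphic quadratic differential with $w$ as natural parameter, and computing the Jacobian of the associated $\mathbb{R}$-linear self-map of $\mathbb{C}$ gives $\|q_{\lambda}\|_{1}=(1-|D(\lambda)|^{2})\|q\|_{1}=(1-|D(\lambda)|^{2})\ext_{x}(F)$. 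The point that needs care is that $v(q_{\lambda})$ is in general \emph{not} a multiple of $F$: pulled back by the affine map its transverse-measure form is $(1+\mathrm{Re}\,D(\lambda))\,\mathrm{Re}(d\zeta)+\mathrm{Im}(D(\lambda))\,\mathrm{Im}(d\zeta)$, a shear of the form $\mathrm{Re}(d\zeta)$ of $F=v(q)$, which is a genuine multiple of $F$ only when $\lambda$ is purely imaginary. So I would straighten it: taking $\theta=\theta(\lambda)$ with $\theta/2=-\arg(1-D(\lambda))$, the transverse-measure form $\mathrm{Re}\!\big(e^{i\theta/2}\,dw\big)$ of $v(e^{i\theta}q_{\lambda})$ becomes, after pull-back, the positive multiple $c(\lambda)\,\mathrm{Re}(d\zeta)$ with $c(\lambda)=(1-|D(\lambda)|^{2})/|1-D(\lambda)|$ (a short computation), so that $v(e^{i\theta}q_{\lambda})=c(\lambda)F$. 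Since $e^{i\theta}q_{\lambda}$ is a holomorphic quadratic differential on $X_{\lambda,q}$ with vertical foliation $c(\lambda)F$, the uniqueness of Hubbard--Masur differentials and the scaling relation $q_{tG,y}=t^{2}q_{G,y}$ ($t>0$) give $q_{F,x(\lambda,q)}=c(\lambda)^{-2}e^{i\theta}q_{\lambda}$, whence
\[
\ext_{x(\lambda,q)}(F)=\|q_{F,x(\lambda,q)}\|_{1}=c(\lambda)^{-2}\|q_{\lambda}\|_{1}=\frac{|1-D(\lambda)|^{2}}{1-|D(\lambda)|^{2}}\,\ext_{x}(F),
\]
and the elementary identities $|1-D(\lambda)|^{2}=4/|\lambda+i|^{2}$, $1-|D(\lambda)|^{2}=4\,\mathrm{Im}(\lambda)/|\lambda+i|^{2}$ reduce the coefficient to $1/\mathrm{Im}(\lambda)$, proving (a).

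\smallskip

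\emph{Part (c).} Write $\phi=\phi_{q_{F,x}}$, $v_{\lambda}=\phi_{*}\big(2i\,\mathrm{Im}(\lambda)\,\partial/\partial\lambda\big)$ and $y=x(\lambda,q_{F,x})$. The real vector underlying $2i\,\mathrm{Im}(\lambda)\,\partial/\partial\lambda$ is the unit-speed forward tangent of the vertical line through $\lambda$ for the hyperbolic metric of curvature $-4$; since $\phi$ is an isometric embedding for the Teichm\"uller metric, its $\phi$-image is a unit-speed Teichm\"uller geodesic, hence $\teichmullernorm{y}{v_{\lambda}}=1$. On the other hand, by (a) one has $\log\ext_{\phi(\cdot)}(F)=-\log\mathrm{Im}(\cdot)+\mathrm{const}$ on $\mathbb{H}$, so evaluating its differential on $2i\,\mathrm{Im}(\lambda)\,\partial/\partial\lambda$ gives $d\log\ext_{(\cdot)}(F)(v_{\lambda})=-2$. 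Comparing with the Gardiner formula \cref{eq:Gardiner-formula}, i.e. $d\log\ext_{(\cdot)}(F)|_{y}(v)=-2\,\mathrm{Re}\langle v,q_{F,y}\rangle/\ext_{y}(F)$, we get $\mathrm{Re}\langle v_{\lambda},q_{F,y}\rangle=\ext_{y}(F)=\|q_{F,y}\|_{1}$. Now $v_{\lambda}$, being of unit Teichm\"uller norm, is represented by a Teichm\"uller differential $\overline{\alpha}/|\alpha|$ with $\alpha\in A^{2}$, $\|\alpha\|_{1}=1$ (an infinitesimally extremal Beltrami differential on an analytically finite surface is a Teichm\"uller differential; cf.\ the discussion around \cref{coro:teichmuller_differenital} and the uniqueness in \cref{lem:derivative_Teichmuller}). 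Then $\mathrm{Re}\langle v_{\lambda},q_{F,y}\rangle=\mathrm{Re}\int(\overline{\alpha}/|\alpha|)\,q_{F,y}=\|q_{F,y}\|_{1}$ forces $(\overline{\alpha}/|\alpha|)\,q_{F,y}=|q_{F,y}|$ almost everywhere, so $q_{F,y}$ and $\alpha$ have the same argument a.e.\ and, being holomorphic, satisfy $q_{F,y}=\|q_{F,y}\|_{1}\,\alpha$; therefore $v_{\lambda}$ is represented by $\overline{q_{F,y}}/|q_{F,y}|$, which is (c).

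\smallskip

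\emph{Part (b), and the main obstacle.} Since $\phi$ is an isometric embedding, the image of any vertical line $\{\mathrm{Re}\,\lambda=\mathrm{const}\}$ — a hyperbolic geodesic — is a Teichm\"uller geodesic; at each of its points $\phi(\lambda)$ its unit forward tangent is $v_{\lambda}$, which by (c) is represented by $\overline{q_{F,\phi(\lambda)}}/|q_{F,\phi(\lambda)}|$, so this Teichm\"uller geodesic is directed at every point by the Hubbard--Masur differential for $F$ there, a holomorphic quadratic differential whose vertical foliation is $F$. This is (b). The real work — and the main obstacle — is the straightening step in (a): one must recognize that the Hubbard--Masur differential for $F$ at $x(\lambda,q_{F,x})$ is not the naive push-forward $(f_{\lambda,q})_{*}q_{F,x}$ but a definite rotation and rescaling of it, and controlling that rotation (hence the factor $c(\lambda)$) is the crux; a routine but indispensable accompanying remark is that the affine description of $f_{\lambda,q}$, the Jacobian computation and the identification of the foliations are all carried out on the complement of the finite zero sets of the quadratic differentials involved.
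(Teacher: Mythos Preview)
Your argument is correct, and for part (c) it is essentially the paper's own proof (use (a) together with the Gardiner formula to force the equality case in the pairing), only carried out with more care in justifying that the unit vector is a Teichm\"uller differential and in the equality step.

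Where you genuinely diverge from the paper is in (a) and (b). For (a), the paper restricts to a simple closed curve $\alpha$, invokes Marden--Masur's description of the Teichm\"uller deformation of the characteristic annulus of $q_{\alpha,x}$ to obtain the formula for $\ext_{x(\lambda,q_{\alpha,x})}(\alpha)$, and then passes to general $F$ by density of weighted simple closed curves and continuity of $F\mapsto q_{F,x}$. You instead work directly with the flat chart of $q_{F,x}$ for an arbitrary $F$: the key insight --- which the paper never needs --- is that the terminal differential $(f_{\lambda,q})_*q$ does not have $F$ as vertical foliation unless $\lambda$ is imaginary, so one must rotate by $e^{i\theta}$ and rescale to identify $q_{F,x(\lambda,q)}$, and your computation of the rotation angle and of $c(\lambda)=(1-|D|^{2})/|1-D|$ is correct. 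This is more hands-on but self-contained, avoiding both the external reference and the density argument. For (b), the paper again works with the characteristic annulus of a simple closed curve and reads off the terminal differential explicitly; you instead deduce (b) from (c), which is cleaner and immediately valid for all $F$. Both routes are sound; yours trades a literature citation and a limiting argument for an explicit affine-geometry computation, and reorders the logic so that (b) becomes a corollary of (c) rather than an independent calculation.
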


\begin{proof}
The assertions follow from the discussion by Marden and Masur in \cite[\S1.3]{MR393584}. 
We review some details for the convenience of the reader. 

\noindent
{\rm (a)}\quad
We shall only show \cref{eq:extremal_length_disc} for $\alpha\in \mathcal{S}$. 
Since the weighted simple closed curves are dense in $\mathcal{MF}$  and $\mathcal{MF}\ni F\mapsto q_{F,x}\in A^2(X)$ is continuous, we can then conclude \cref{eq:extremal_length_disc} for general measured foliations   by taking limits.

One of the characterisations of the extremal length of $\alpha$ is that it is the reciprocal of the modulus of the \lq characteristic annulus' of $q_\alpha$, that is, the maximal (open) annulus formed by closed leaves of the vertical trajectories of $q_\alpha$. (See also \cite[\S20.3]{MR743423}).
By the discussion by Marden and Masur in \cite[\S1.3]{MR393584}, the extremal length $\ext_{x(\lambda,q_{F,x})}(\alpha)$ satisfies
\begin{equation}
\label{Marden-Masur}
\ext_{x(\lambda,q_{F,x})}(\alpha)=\dfrac{1}{1+{\rm Re}(\lambda')}\ext_{x}(\alpha)
\end{equation}
where $\lambda'$ is a complex number satisfying ${\rm Re}(\lambda')>-1$ and
\begin{equation*}
\dfrac{\lambda-i}{\lambda+i}=\dfrac{\lambda'}{2+\lambda'}.
\end{equation*}
Since ${\rm Re}(\lambda')={\rm Re}\left(-1-i\lambda\right)=-1+{\rm Im}(\lambda)$,  we obtain \cref{eq:extremal_length_disc} from \cref{Marden-Masur} in the case when $F=\alpha\in \mathcal{S}$. 

\medskip
\noindent
(b)\quad
Let $A$ be the characteristic annulus of $q_{\alpha,x}$.
The Teichm\"uller map $f_{\lambda, q}$ defined by $\mu_{\lambda,q_{\alpha,x}}$ is expressed as a map $h_\lambda$ defined by
$$
h_\lambda(z)=z|z|^{-i\lambda-1}=z|z|^{{\rm Im}(\lambda)-1-i{\rm Re}(\lambda)}
$$
on the characteristic annulus $A\cong \{1<|z|<r\}$ with $r=\exp(2\pi/\ext_{x}(\alpha))$. 
The image $h_\lambda(\{1<|z|<r\})=\{1<|z|<r^{{\rm Im}(\lambda)}\}$ corresponds to the characteristic annulus of the terminal quadratic differential $q_{\alpha,x(\lambda,q_{\alpha,x})}$. 
Therefore, the deformation along the vertical line in $\mathbb{H}$ passing through $\lambda\in \mathbb{H}$ is the Teichm\"uller geodesic associated with the differential $q_{\alpha,x(\lambda, q_{\alpha, x})}$.

\medskip
\noindent
{\bf (c)}\quad 
Let $v_\lambda\in T_{x(\lambda,q_{F,x})}\teich_{g,m}$ be the unit tangent vector in $\mathbb{D}_{q_{F,x}}$ at $x(\lambda,q_{F,x})$ as given in the statement (c). 
Then $v_\lambda$ is represented by a Teichm\"uller Beltrami differential $\overline{\psi}/|\psi|$ with $\psi\in A^2(X_{\lambda,q_{F,x}})$.
From (a) above and the Gardiner formula \cref{eq:Gardiner-formula},
\begin{align*}
-{\rm Re}\int_{X_{\lambda,q_{F,x}}}\dfrac{\overline{\psi}}{|\psi|}
\dfrac{q_{F,x(\lambda,q_{F,x})}}{\|q_{F,x(\lambda,q_{F,x})}\|_1}
&=
\dfrac{1}{2}d\log \ext_{x(\cdot,q_{F,x})}(F)[v_\lambda]\\
&=
{\rm Re}\left(2i{\rm Im}(\lambda)\dfrac{d}{d\lambda}\log \ext_{x(\cdot,q_{F,x})}(F)\right) \\
&={\rm Re}\left(2i{\rm Im}(\lambda)\cdot \left(-\dfrac{1}{2i{\rm Im}(\lambda)}\right)\right)\\
&
=-1,
\end{align*}
which means that $\psi=q_{F,x(\lambda,q_{F,x})}$.
\end{proof}
\subsection{Proof of \cref{thm:extremal_length}}
\label{subsec:Proof_thm:extremal_length}
The part (i) follows from  \cref{prop:geodesic}.
Let $x$ be a point in $\teich_{g,m}$ and let $\phi\colon \mathbb{H}\to \teich_{g,m}$ be the Teichm\"uller disc defined by $q_{F,x}$ with $\phi(i)=x$. Since $\omega$ is exact, by \cref{prop:geodesic},
for any two points $\zeta_1,\zeta_2\in \mathbb{H}$,
the hyperbolic geodesic $\gamma\colon [a,b]\to \mathbb{H}$ connecting $\zeta_1$ to $\zeta_2$ is mapped to a geodesic with respect to $\delta^\omega_T$ connecting $x_1=\phi(\zeta_1)$ to $x_2=\phi(\zeta_2)$.
From \cref{eq:torus_case_finsler-randers}
and \cref{lem:extremal_length_Teich}, we have
\begin{align*}
\delta_T^\omega(x_1,x_2)
&=\int_a^b(\teichmullernorm{\phi(\gamma(s))}{\phi_*\circ \dot{\gamma}(t)}+\omega(\phi_*\circ \dot{\gamma}(t)))dt \\
&=d_{hyp}(\zeta_1,\zeta_2)-\dfrac{1}{2}\int_{\phi(\gamma)}
d\log\ext_\cdot(F)
\\
&=d_{hyp}(\zeta_1,\zeta_2)+\dfrac{1}{2}\log \ext_{x_1}(F)-\dfrac{1}{2}\log \ext_{x_2}(F)
\\
&=d_{hyp}(\zeta_1,\zeta_2)+\dfrac{1}{2}\log \dfrac{1}{{\rm Im}(\zeta_1)}-\dfrac{1}{2}\log \dfrac{1}{{\rm Im}(\zeta_2)} \\
&=\int_\gamma\left(ds_{hyp}+\dfrac{1}{2}d\log {\rm Im}(\zeta)\right)=\delta(\zeta_1,\zeta_2),
\end{align*}
which implies the part (ii) of \cref{thm:extremal_length}.

\subsection{Proof of \cref{thm:isometry}}
Let $\phi \colon \mathbb H \to \teich_{g,m}$ be an isometry as in the statement.
We may assume that $\omega$ is exact on $\teich_{g,m}$ by changing it outside a neighbourhood of $\phi(\mathbb H)$. 
Then, there is a $C^1$-function $F_\omega$ on $\teich_{g,m}$ such that $dF_\omega=\omega$. 
We set $f_\omega=F_\omega\circ \phi$.

Take two points $\zeta_1=\xi_1+i\eta_1$ and
$\zeta_2=\xi_2+i\eta_2\in \mathbb{H}$.
Let $\gamma\colon [0,s_0]\to \mathbb{H}$ be a hyperbolic geodesic connecting $\zeta_1$ to $\zeta_2$. 
By \cref{prop:geodesic}, $\phi\circ\gamma\colon [0,s_0]\to \teich_{g,m}$ is a Teichm\"uller geodesic, and since $\phi$ is an isometry, we obtain
\begin{align}
d_{hyp}(\zeta_1,\zeta_2)+\dfrac{1}{2}\log \dfrac{\eta_2}{\eta_1}
&=\delta(\zeta_1,\zeta_2)=\delta^\omega_T(\phi(\zeta_1),\phi(\zeta_2))
\nonumber
\\
&=\int_{0}^{s_0}(\teichmullernorm{\phi(\gamma(t))}{\phi_*\circ \dot{\gamma}(t)}+\omega(\phi_*\circ \dot{\gamma}(t)))dt
\nonumber \\
&=
d_T(\phi(\zeta_1),\phi(\zeta_2))+
f_\omega(\zeta_2)-f_\omega(\zeta_1).
\label{eq:isometry1}
\end{align}

\medskip
\noindent
{\bf Case 1.}(horizontal lines)\quad
Suppose that $\eta_1=\eta_2$.
Since both $d_{hyp}$ and $d_T$ are symmetric,
from \cref{eq:isometry1}, we obtain
\begin{equation}
\label{eq:f_omega_horizontal1}
f_\omega(\zeta_1)=f_\omega(\zeta_2),\quad \text{and hence}\quad
d_T(\phi(\zeta_1),\phi(\zeta_2))=d_{hyp}(\zeta_1,\zeta_2).
\end{equation}

\medskip
\noindent
{\bf Case 2.}(vertical lines)\quad
Suppose $\xi_1=\xi_2$ and $\eta_1>\eta_2$. In this case, the geodesic $\gamma$ is a vertical segment from $\zeta_1$ to $\zeta_2$. 
Since $\delta(\zeta_1, \zeta_2)=0$ in this case, from \cref{eq:isometry1}, we have
\begin{equation}
\label{eq:delta_integrate-3}
f_\omega(\zeta_1)-f_\omega(\zeta_2)
=d_T(\phi(\zeta_1),\phi(\zeta_2)).
\end{equation}
For $x\in \mathbb{R}$, let $L_\xi=\{\xi+\eta i\mid \eta>0\}$. 
Then by \cref{eq:delta_integrate-3}, we see that $f(L_\xi)$ is a geodesic with respect to $d_T$.
Take a measured foliation $F_\xi$ on $S$ such that $\phi(L_\xi)$ is the Teichm\"uller geodesic defined by the Hubbard--Masur differential for $F_\xi$. 
To describe this more precisely, fix $\eta_0>0$ and set $x(\xi)=\phi(\xi+i\eta_0) \in \teich_{g,m}$. 
Let $x(\xi+i\eta)$ be the image of the Teichm\"uller map from $x(\xi)$ with the Betrami differenital
\begin{equation}
\label{eq:teichmuller-Beltrami-L_xi}
\tanh(t)\dfrac{\overline{q_{F_\xi,x(\xi)}}}{|q_{F_\xi,x(\xi)}|},
\end{equation}
where $t=t(\eta)$ satisfies $|t|=d_T(x(\xi+i\eta),x(\xi))$, $t>0$ if $\eta>\eta_0$, and $t\le 0$ otherwise.
Then we have
$\phi(L_\xi)=\left\{x(\xi+i\eta)\mid \eta>0\right\}$.
By the Gardiner formula \cref{eq:Gardiner-formula}, $\ext_{x(\xi+i\eta)}(F_\xi)$ decreases as $\eta$ increases.
Hence, from the Kerckhoff formula, we have
$$
\ext_{x(\xi+i\eta)}(F_\xi)=
\begin{cases}
e^{-2d_T(x(\xi+i\eta),x(\xi))}\ext_{x(\xi)}(F_\xi) & (\eta\ge \eta_0) \\
e^{2d_T(x(\xi+i\eta),x(\xi))}\ext_{x(\xi)}(F_\xi) & (\eta\le \eta_0).
\end{cases}
$$

Now, for any $\eta, \eta'$, take $\eta_3>0$  smaller than $\min\{\eta, \eta'\}$.
From \cref{eq:delta_integrate-3}, we can compute as follows:
\begin{align}
\int_{\gamma'} \omega&=f_\omega(\xi+i\eta')-f_\omega(\xi+i\eta) \label{eq:f_omega_vertical}
\\
&=
(f_\omega(\xi+i\eta_2)-f_\omega(\xi+i\eta_3))-(f_\omega(\xi+i\eta_1)-f_\omega(\xi+i\eta_3))
\nonumber \\
&=
\dfrac{1}{2}\log\dfrac{\ext_{x(\xi+i\eta_3)}(F_\xi)}{\ext_{x(\xi+i\eta')}(F_\xi)}
-\dfrac{1}{2}\log\dfrac{\ext_{x(\xi+i\eta_3)}(F_\xi)}{\ext_{x(\xi+i\eta)}(F_\xi)}
\nonumber \\
&=
-\dfrac{1}{2}\log\dfrac{\ext_{x(\xi+i\eta')}(F_\xi)}{\ext_{x(\xi+i\eta)}(F_\xi)}
\nonumber \\
&=-\dfrac{1}{2}\int_{\gamma'}d\log\ext_{\cdot}(F_\xi),
\nonumber
\end{align}
where $\gamma'$ is the image  under $\phi$ of the vertical segment from $\xi+i\eta$ to $\xi+i\eta'$ in $\mathbb{H}$.
We note that by \cref{eq:teichmuller-Beltrami-L_xi} or (c) of \cref{lem:extremal_length_Teich}, the tangent vector $v_y\in T_y\teich_{g,m}$ along $\phi(L_\xi)$ at $y\in \phi(L_\xi)$ has unit length with respect to the Teichm\"uller metric, and is given by
the Beltrami differential
\begin{equation}
\label{eq:Beltrami-L_xi}
\dfrac{\overline{q_{F_\xi,x}}}{|q_{F_\xi,x}|}.
\end{equation}
Hence we obtain
\begin{align*}
\left|-\dfrac{1}{2}d\log \ext_{\cdot}(F_\xi)[v_y]\right|
=\dfrac{1}{2}\cdot \dfrac{2}{\|q_{F,x}\|}{\rm Re}\int_X\dfrac{\overline{q_{F_\xi,x}}}{|q_{F_\xi,x}|}q_{F_\xi,x}=1.
\end{align*}
Since $\|\omega\|_T\le 1$ on the image $\phi(\mathbb{H})$, from \cref{eq:f_omega_vertical}, we conclude that we have
\begin{equation}
\label{eq:omega_on_vertical_segment}
\omega=-\dfrac{1}{2}d\log \ext_{\cdot}(F_\xi).
\end{equation}
on $L_\xi$. 

\medskip
\noindent
{\bf Case 3. }(general case)\quad
We take $\zeta_1=\xi_1+i\eta_1$ and
$\zeta_2=\xi_2+i\eta_2\in \mathbb{H}$ to be arbitrary. 
Set  $\zeta_3=\xi_2+i\eta_1$.
By \cref{eq:extremal_length_disc} we have
\begin{equation}
\label{eq:extremal_length_on_L_xi}
\ext_{\phi(\xi+i\eta_1)}(F_\xi)=\dfrac{\eta_2}{\eta_1}\ext_{\phi(\xi+i\eta_2)}(F_\xi)
\end{equation}
for all $\xi\in \mathbb{R}$ and $\eta_1,\eta_2>0$, and $\ext_{\phi(\zeta_3)}(F_{\xi_2})=\ext_{\phi(\zeta_1)}(F_{\xi_1})$. 
Combining this with the argument in Case 2, we have
\begin{align*}
f_\omega(\zeta_2)-f_\omega(\zeta_1)
&=(f_{\omega}(\zeta_2)-f_\omega(\zeta_3))+(f_\omega(\zeta_3)-f_\omega(\zeta_1))\\
&=-\dfrac{1}{2}\log \ext_{\phi(\zeta_2)}(F_{\xi_2})+\dfrac{1}{2}\log \ext_{\phi(\zeta_1)}(F_{\xi_2})\noindent \\
&=-\dfrac{1}{2}\log \dfrac{\eta_1}{\eta_2}.
\end{align*}
Then, from \cref{eq:isometry1}, we conclude that $d_T(\phi(\zeta_1),\phi(\zeta_2))=d_{hyp}(\zeta_1,\zeta_2)$ for any $\zeta_1$, $\zeta_2\in \mathbb{H}$. Hence, $\phi\colon (\mathbb{H},d_{hyp})\to (\teich_{g,m},d_T)$ is an isometry. 
\cite[Theorem 1.1]{MR3608291} shows that in this situation, $\phi$ is either holomorphic or anti-holomorphic, and the image is the Teichm\"uller disc. 
As shown in \cref{lem:extremal_length_Teich}, $F_{\xi_1}=F_{\xi_2}$ for all $\xi_1,\xi_2\in \mathbb{R}$. 
Setting $F=F_\xi$ ($\xi\in\mathbb{R})$, we see that the image $\phi(\mathbb{H})$ is the Teichm\"uller disc defined by the Hubbard--Masur differential for $F$.

Consider $\zeta=\xi+i\eta \in \mathbb{H}$ and $L_\xi$ defined above. 
By \cref{eq:f_omega_horizontal1}, the derivative of $f_\omega$ at $\zeta$ in the horizontal direction is constantly zero. 
As shown in (c) of \cref{lem:extremal_length_Teich}, the image $v\in T_{\phi(\zeta)}\teich_{g,m}$ of the unit tangent vector $2i{\rm Im}(\zeta)(\partial/\partial\zeta)\in T_\zeta\mathbb{H}$ to $L_\xi$ at $\zeta$ is represented by the Teichm\"uller Beltrami differential
$$
\dfrac{\overline{q_{F,\phi(\zeta)}}}{|q_{F,\phi(\zeta)}|}.
$$
Hence, by the Gardiner formula \cref{eq:Gardiner-formula}, the derivative of the function
$$
\mathbb{H}\ni \zeta\mapsto -\dfrac{1}{2}\log \ext_{\phi(\zeta)}(F)
$$
is also zero in the horizontal direction in $\mathbb{H}$. As a consequence, by \cref{eq:omega_on_vertical_segment},
$$
\omega=-\dfrac{1}{2}d\log\ext_{\cdot}(F)
$$
on the image $\phi(\mathbb{H})$.

\subsection{Proof of \cref{thm:other_disc}}
Let $x$ be a point in $\teich_{g,m}$,  and $G$ a measured foliation on $S$. 

\noindent
(1)  Suppose that  $\alpha q_{F,x}\ne q_{G,x}$ for any complex number $\alpha$, and hence $\mathbb{D}_{q_{F,x}}\cap \mathbb{D}_{q_{G,x}}=\{x\}$.

Then we claim the following.
\begin{claim}
\label{claim:1}
For any $y\in \mathbb{D}_{q_{G,x}}$, we have $\mathbb{D}_{q_{F,y}}\cap \mathbb{D}_{q_{G,y}}=\{y\}$.
\end{claim}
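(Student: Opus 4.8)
Note that \cref{claim:1} does not involve the $1$-form $\omega$: it is purely a statement about Teichm\"uller discs. The plan is to argue by contradiction, using two auxiliary facts. The first is a \emph{rigidity} statement — two Teichm\"uller discs sharing a Teichm\"uller geodesic segment of positive length must coincide — which is standard (see e.g. \cite{MR393584, MR903027}); I would recall its proof as follows. Given such a common segment of $\mathbb{D}_{q_1}$ and $\mathbb{D}_{q_2}$, pick a point $w=(W,g)$ in its interior; the oriented tangent direction there is a unit Teichm\"uller vector, represented by a Teichm\"uller Beltrami differential $\overline q/|q|$ for a unique $q\in A^2(W)$ with $\|q\|_1=1$ (uniqueness of the extremal representative, as in the parametrisation of Teichm\"uller rays recalled in \cref{subsec:Teichmuller-theory}). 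From \cref{eq:mu-q-lambda} one checks that $\mathbb{D}_{cq}=\mathbb{D}_q$ for all $c\in\complexes^\ast$ and that $\mathbb{D}_{q_j}$ contains the geodesic through $w$ in the direction $\overline q/|q|$ only when $q_j=cq$ for some $c\in\complexes^\ast$ (a holomorphic quadratic differential whose pointwise argument differs from that of $q$ by a constant is a constant multiple of $q$). Hence $\mathbb{D}_{q_1}=\mathbb{D}_q=\mathbb{D}_{q_2}$.

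The second fact I would establish is a \emph{stability} property: for every nonzero measured foliation $H$, every $y_1\in\teich_{g,m}$, and every $y_2\in\mathbb{D}_{q_{H,y_1}}$, one has $\mathbb{D}_{q_{H,y_2}}=\mathbb{D}_{q_{H,y_1}}$. To prove this, write $y_2=\phi_{q_{H,y_1}}(\lambda_2)$. By \cref{lem:extremal_length_Teich}(c) applied at $\lambda_2$, the tangent vector at $y_2$ to the image under $\phi_{q_{H,y_1}}$ of the vertical line through $\lambda_2$ is represented by $\overline{q_{H,y_2}}/|q_{H,y_2}|$; so the vertical geodesic of $\mathbb{D}_{q_{H,y_1}}$ through $y_2$ is the Teichm\"uller geodesic through $y_2$ in the direction $\overline{q_{H,y_2}}/|q_{H,y_2}|$. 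Applying \cref{lem:extremal_length_Teich}(c) once more, now to $\mathbb{D}_{q_{H,y_2}}$ at its base point $y_2$, the vertical geodesic of $\mathbb{D}_{q_{H,y_2}}$ through $y_2$ is tangent at $y_2$ to the same direction. By uniqueness of Teichm\"uller geodesics through a point in a given direction, these geodesics coincide, so the two discs share a positive-length geodesic segment and, by rigidity, are equal.

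With these in hand, the proof of \cref{claim:1} runs as follows. Fix $y\in\mathbb{D}_{q_{G,x}}$ and suppose for contradiction that $z\in\mathbb{D}_{q_{F,y}}\cap\mathbb{D}_{q_{G,y}}$ with $z\ne y$. Stability applied to $G$ gives $\mathbb{D}_{q_{G,y}}=\mathbb{D}_{q_{G,x}}$, so $y$ and $z$ both lie in $\mathbb{D}_{q_{F,y}}\cap\mathbb{D}_{q_{G,x}}$. Each Teichm\"uller disc being an isometrically embedded hyperbolic plane in the uniquely geodesic space $\teich_{g,m}$, the Teichm\"uller geodesic segment joining $y$ to $z$ is contained in both discs; having positive length, it forces $\mathbb{D}_{q_{F,y}}=\mathbb{D}_{q_{G,x}}$ by rigidity. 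In particular $x\in\mathbb{D}_{q_{G,x}}=\mathbb{D}_{q_{F,y}}$, and stability applied to $F$ (with base point $y$) then gives $\mathbb{D}_{q_{F,x}}=\mathbb{D}_{q_{F,y}}=\mathbb{D}_{q_{G,x}}$. But then $\mathbb{D}_{q_{F,x}}\cap\mathbb{D}_{q_{G,x}}=\mathbb{D}_{q_{F,x}}$, which contains more than one point, contradicting the standing hypothesis $\mathbb{D}_{q_{F,x}}\cap\mathbb{D}_{q_{G,x}}=\{x\}$. Hence no such $z$ exists, which is the claim.

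The step I expect to be the main obstacle is the stability property, and in particular the careful use of \cref{lem:extremal_length_Teich}(c): it must be applied not only at the base point but at an arbitrary point $y_2$ of $\mathbb{D}_{q_{H,y_1}}$, and then re-applied to $\mathbb{D}_{q_{H,y_2}}$, so that the two vertical geodesics through $y_2$ are seen to have \emph{exactly} the same tangent direction $\overline{q_{H,y_2}}/|q_{H,y_2}|$ at $y_2$, with no sign ambiguity. The rigidity statement could alternatively be quoted from the literature rather than reproved, and the concluding contradiction is purely set-theoretic.
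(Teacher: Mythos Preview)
Your proof is correct and follows essentially the same route as the paper's: a contradiction argument built on the two ingredients you call \emph{rigidity} (two Teichm\"uller discs sharing a geodesic segment coincide) and \emph{stability} ($\mathbb{D}_{q_{H,y}}$ does not depend on the choice of $y$ within it), which the paper also invokes, citing the identity theorem for holomorphic discs and the Marden--Masur discussion respectively. Your derivation of stability from \cref{lem:extremal_length_Teich}(c) is somewhat more explicit than the paper's reference to \cite{MR393584}, but the architecture and the concluding chain $\mathbb{D}_{q_{G,x}}=\mathbb{D}_{q_{G,y}}=\mathbb{D}_{q_{F,y}}=\mathbb{D}_{q_{F,x}}$ are the same.
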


\begin{proof}
Otherwise, there is $y\in D_{q_{G,x}}$ such that $\mathbb{D}_{q_{F,y}}$ and $\mathbb{D}_{q_{G,y}}$ share at least two points. By the uniqueness of the Teichm\"uller geodesic, $\mathbb{D}_{q_{F,y}}$ and $\mathbb{D}_{q_{G,y}}$ share a common Teichm\"uller geodesic line passing through these two points. Since  $\mathbb{D}_{q_{F,y}}$ and $\mathbb{D}_{q_{G,y}}$ are holomorphic discs,  by the identity theorem, $\mathbb{D}_{q_{F,y}}=\mathbb{D}_{q_{G,y}}$. 
Since both $x$ and $y$ lie in $\mathbb{D}_{q_{G,x}}=\mathbb{D}_{q_{F,y}}$, from the discussion in \cref{lem:extremal_length_Teich} (or the discussion in \cite[\S1.3]{MR393584}), we have $\mathbb{D}_{q_{F,y}}=\mathbb{D}_{q_{F,x}}$ and $\mathbb{D}_{q_{G,x}}=\mathbb{D}_{q_{G,y}}$ . Therefore, we obtain
$$
\mathbb{D}_{q_{G,x}}=\mathbb{D}_{q_{G,y}}=\mathbb{D}_{q_{F,y}}=\mathbb{D}_{q_{F,x}},
$$
which contradicts our assumption.
\end{proof}

Let $y$ be a point in $\mathbb{D}_{q_{G,x}}$, and $v_y$ the unit tangent vector to $\mathbb{D}_{q_{G,x}}$ at $y$ represented by
$\overline{q_{G,y}}/|q_{G,y}|$ (cf. (c) of \cref{lem:extremal_length_Teich}). 
We note that by \cref{claim:1}, $q_{G,y}$ is not a complex scalar multiple of $q_{F,y}$.
Hence,
\begin{align*}
\left|-\dfrac{1}{2}d\log \ext_{\cdot}(F)[v_y]\right|
&=\left|
{\rm Re}\int_{X_{\lambda,q_{G,y}}}\dfrac{\overline{q_{G,y}}}{|q_{G,y}|}
\dfrac{q_{F,x(\lambda,q_{F,y})}}{\|q_{F,y}\|_1}
\right|<1.
\end{align*}
Therefore, for any compact set $K$ in $\mathbb{D}_{q_{G,x}}$, 
there is a constant $C_K<1$ such that
$$
\left|-\dfrac{1}{2}d\log \ext_{\cdot}(F)[v_y]\right|\le C_K
$$
for all $y\in K$.

Let $x_1$ and  $x_2$ be distinct points on $\mathbb{D}_{q_{G,x}}$, and $\gamma\subset \mathbb{D}_{q_{G,x}}$ the Teichm\"uller geodesic containing  $x_1$ and $x_2$. From the above discussion, we have
$$
\left|
\dfrac{1}{2}\log \ext_{x_1}(F)-\dfrac{1}{2}\log \ext_{x_2}(F)
\right|
<d_T(x_1,x_2)
$$
and
$$
\delta^\omega_T(x_1,x_2)=d_T(x_1,x_2)+\dfrac{1}{2}\log \ext_{x_1}(F)-\dfrac{1}{2}\log \ext_{x_2}(F)>0,
$$
which implies that $\delta^\omega_T$ separates two points in $\mathbb{D}_{q_{G,x}}$.

\noindent
(2)
Let $r_G=r_{G,x}\colon [0,\infty)\to \teich_{g,m}$ be the Teichm\"uller geodesic ray defined by $q_{G,x}$ with arclength parameterisation. 
By \cite[Lemma 1]{MR3009545}, the function
$$
[0,\infty)\ni t\mapsto 
e^{-\delta^{\omega}_T(x,r_G(t))}=e^{-t}\left(\dfrac{\ext_{r_G(t)}(F)}{\ext_{x_0}(F)}\right)^{1/2}
$$
is non-increasing and tends to $\dfrac{\mathcal{E}(F)}{\ext_{x_0}(F)^{1/2}}$ as $t\to \infty$ where $\mathcal{E}$ is some continuous function defined on $\mathcal{MF}$ (see also \cite[Theorem 1.1]{MR2449148}). 
Let $G=G_1+\cdots+G_m$ be the decomposition of $G$ into indecomposable components (for detail, see \cite{MR2449148}).  In \cite[Corollary 1]{MR3956189}, Walsh showed that the limit function $\mathcal{E}$ is expressed as
$$
\mathcal{E}(H)=\sqrt{
\sum_{i=1}^m\dfrac{i(G_i,H)^2}{i(G_i,\mathcal{H}(q_{G,x}))}}
$$
for $H\in\mathcal{MF}$,
where $\mathcal{H}(q_{G,x})$ is the horizontal foliation of $q_{G,x}$. Therefore, $\mathcal{E}(H)=0$ if and only if $i(G,H)=0$.
This means that $\delta^\omega_T(x,r_G(t))$ is uniformly bounded in terms of $t\ge 0$ if and only if $i(F,G)\ne 0$.

Finally, we prove the incompleteness of the restriction of $\delta_T^\omega$ to any Teichm\"uller disc. 
Let $x$ be a point in $\teich_{g,m}$ and $G$ a measured foliation on $S$. By \cite[Theorem 2]{MR855297}, the vertical foliation $G_\theta$ of $e^{i\theta}q_{G,x}$ is uniquely ergodic for almost every $\theta$. Therefore,  $i(F,G_\theta)\ne 0$ for almost every $\theta$.
It follows that almost all Teichm\"uller geodesic rays emanating from $x$ in $\mathbb{D}_{q_{G,x}}$ have bounded length with respect to the distance $\delta^\omega_T$, and in particular, the restriction of $\delta^\omega_T$ to $\mathbb{D}_{q_{G,x}}$ is incomplete.

\bibliographystyle{acm}
\bibliography{mop.bib}

\begin{thebibliography}{10}

\bibitem{MR3608291}
{\sc Antonakoudis, S.~M.}
\newblock Isometric disks are holomorphic.
\newblock {\em Invent. Math. 207}, 3 (2017), 1289--1299.

\bibitem{BPT}
{\sc Belkhirat, A., Papadopoulos, A., and Troyanov, M.}
\newblock Thurston's weak metric on the {T}eichm\"{u}ller space of the torus.
\newblock {\em Trans. Amer. Math. Soc. 357}, 8 (2005), 3311--3324.

\bibitem{MR0296877}
{\sc Busemann, H.}
\newblock {\em Recent synthetic differential geometry}.
\newblock Ergebnisse der Mathematik und ihrer Grenzgebiete, Band 54.
  Springer-Verlag, New York-Berlin, 1970.

\bibitem{MR568308}
{\sc Fathi, A., Laudenbach, F., and Po\'enaru, V.}
\newblock {\em Travaux de {T}hurston sur les surfaces}, vol.~66 of {\em
  Ast\'{e}risque}.
\newblock Soci\'{e}t\'{e} Math\'{e}matique de France, Paris, 1979.
\newblock S\'{e}minaire Orsay, With an English summary.

\bibitem{MR736212}
{\sc Gardiner, F.~P.}
\newblock Measured foliations and the minimal norm property for quadratic
  differentials.
\newblock {\em Acta Math. 152}, 1-2 (1984), 57--76.

\bibitem{MR903027}
{\sc Gardiner, F.~P.}
\newblock {\em Teichm\"{u}ller theory and quadratic differentials}.
\newblock Pure and Applied Mathematics (New York). John Wiley \& Sons, Inc.,
  New York, 1987.
\newblock A Wiley-Interscience Publication.

\bibitem{MR245787}
{\sc Hamilton, R.~S.}
\newblock Extremal quasiconformal mappings with prescribed boundary values.
\newblock {\em Trans. Amer. Math. Soc. 138\/} (1969), 399--406.

\bibitem{MR1215481}
{\sc Imayoshi, Y., and Taniguchi, M.}
\newblock {\em An introduction to {T}eichm\"{u}ller spaces}.
\newblock Springer-Verlag, Tokyo, 1992.
\newblock Translated and revised from the Japanese by the authors.

\bibitem{MR855297}
{\sc Kerckhoff, S., Masur, H., and Smillie, J.}
\newblock Ergodicity of billiard flows and quadratic differentials.
\newblock {\em Ann. of Math. (2) 124}, 2 (1986), 293--311.

\bibitem{MR0241633}
{\sc Kru\v{s}kal, S.~L.}
\newblock On the theory of extremal quasiconformal mappings.
\newblock {\em Sibirsk. Mat. \v{Z}. 10\/} (1969), 573--583.

\bibitem{MR393584}
{\sc Marden, A., and Masur, H.}
\newblock A foliation of {T}eichm\"{u}ller space by twist invariant disks.
\newblock {\em Math. Scand. 36}, 2 (1975), 211--228.

\bibitem{MR2449148}
{\sc Miyachi, H.}
\newblock Teichm\"{u}ller rays and the {G}ardiner-{M}asur boundary of
  {T}eichm\"{u}ller space.
\newblock {\em Geom. Dedicata 137\/} (2008), 113--141.

\bibitem{MR3009545}
{\sc Miyachi, H.}
\newblock Teichm\"{u}ller rays and the {G}ardiner-{M}asur boundary of
  {T}eichm\"{u}ller space {II}.
\newblock {\em Geom. Dedicata 162\/} (2013), 283--304.

\bibitem{MR3289702}
{\sc Miyachi, H.}
\newblock Extremal length geometry.
\newblock In {\em Handbook of {T}eichm\"{u}ller theory. {V}ol. {IV}, ed. A.
  Papadopoulos,}, vol.~19 of {\em IRMA Lect. Math. Theor. Phys.} Eur. Math.
  Soc., Z\"{u}rich, 2014, pp.~197--234.

\bibitem{MOP}
{\sc Miyachi, H., Ohshika, K., and Papadopoulos, A.}
\newblock Tangent spaces of the {T}eichm\"{u}ller space of the torus with
  {T}hurston's weak metric.
\newblock {\em Ann. Fenn. Math. 47}, 1 (2022), 325--334.

\bibitem{MR927291}
{\sc Nag, S.}
\newblock {\em The complex analytic theory of {T}eichm\"{u}ller spaces}.
\newblock Canadian Mathematical Society Series of Monographs and Advanced
  Texts. John Wiley \& Sons, Inc., New York, 1988.
\newblock A Wiley-Interscience Publication.

\bibitem{MR3371}
{\sc Randers, G.}
\newblock On an asymmetrical metric in the fourspace of general relativity.
\newblock {\em Phys. Rev. (2) 59\/} (1941), 195--199.

\bibitem{MR743423}
{\sc Strebel, K.}
\newblock {\em Quadratic differentials}, vol.~5 of {\em Ergebnisse der
  Mathematik und ihrer Grenzgebiete (3) [Results in Mathematics and Related
  Areas (3)]}.
\newblock Springer-Verlag, Berlin, 1984.

\bibitem{Teich}
{\sc Teichm\"{u}ller, O.}
\newblock Extremale quasikonforme {A}bbildungen und quadratische
  {D}ifferentiale.
\newblock {\em Abh. Preuss. Akad. Wiss. Math.-Nat. Kl. 1939}, 22 (1940), 197.

\bibitem{TeichTr}
{\sc Teichm\"{u}ller, O.}
\newblock Extremal quasiconformal mappings and quadratic differentials.
\newblock In {\em Handbook of {T}eichm\"{u}ller theory. {V}ol. {V}, ed. A.
  Papadopoulos}, vol.~26 of {\em IRMA Lect. Math. Theor. Phys.} Eur. Math.
  Soc., Z\"{u}rich, 2016, pp.~321--483.
\newblock Translated from the German by G. Th\'{e}ret.

\bibitem{MR3956189}
{\sc Walsh, C.}
\newblock The asymptotic geometry of the {T}eichm\"{u}ller metric.
\newblock {\em Geom. Dedicata 200\/} (2019), 115--152.

\end{thebibliography}

\end{document}